\begin{document}
\thispagestyle{plain}
\TitleHeader

\section{Introduction}
\label{sec:intro}

The capturing or approximation of multivariate functions is nowadays
one of the key elements to tackle a great number of scientific and
real-world problems.  A complicated function is here usually replaced
by a sufficient simple function to find the numerical solution of the
problem or to speed up the required numerical computations.
Especially, if the domain becomes high dimensional, the approximation
should rely on relatively few given function values.  Unfortunately,
the approximation of highly multivariate functions is hampered by the
so-called curse of dimensionality \cite{HNW11,NW09,Bel61}.  One of the
most impressive results given by \pers{Novak} \& \pers{Woźniakowski}
\cite{NW09} is here the intractability of the uniform approximation of
even smooth functions.

In many applications, the considered functions however possess
specific low-di\-men\-sion\-al structures that allow to overcome the curse
of dimensionality.  One popular approach assumes that the function of
interest looks like a ridge.  Mathematically, a \emph{ridge function}
$f \colon \BR^d \to \BR$ possesses the form
\begin{equation*}
  f(x) \coloneqq g(Ax)
  \qquad\text{with}\qquad
  A \in \BR^{\ell \times d},
\end{equation*}
where $\ell \ll d$.  This functions are especially constant along the
kernel of $A$.  The function $g$ is usually called the \emph{ridge
  profile} whereas $A$ is the \emph{ridge matrix}.  In the extreme
case $\ell = 1$, the ridge function becomes
\begin{equation*}
  f(x) \coloneqq g( \iProdn{a}{x})
  \qquad\text{with}\qquad
  a \in \BR^d.
\end{equation*}
Although this (vector-based) ridge functions are constant along a
$(d-1)$-dimensional subspace perpendicular to $a$, there are numerous
applications showing their usefulness.  For instance, they appear as
plane waves in the theory of partial differential equations
\cite{Joh81}, play a major role in computed tomography \cite{LS75},
occur in statistical regression theory \cite{DJ89,FS81}, and provide
the basis to analyse neural networks
\cite{Pin99,Ism15,Can99,JS21,Pet99,XC11}.  Further, the approximation
of a multivariate function by a sum of vector-based ridge functions is
a topic on its own and has been studied in
\cite{AAI19,JS21,KKM10,Kro97,Moi99,Mai10,Pet99}.  The approximation
properties of matrix-based ridge functions have been considered in
\cite{LP93}.

Due to their usefulness and approximation properties, the question
arises how to learn the ridge profile and the ridge matrix/vector from
certain function evaluations.  A first step into this direction have
been done by \pers{DeVore}, \pers{Petrova} \& \pers{Wojtaszczyk}
\cite{DPW11}, who study the approximation of functions
$f \colon \BR^d \to \BR$ only depending on a small set of active
variables, \ie\ $f(x_1, \dots, x_d) = g(x_{i_1}, \dots, x_{i_\ell})$
with $\ell \ll d$,---ridge function, whose ridge matrix consists of
only unit vectors.  For this, \pers{DeVore}, \pers{Petrova} \&
\pers{Wojtaszczyk} establish an algorithm to find the active variables
and approximate the profile.

For vector-based ridge functions, a first recovery algorithm using
function queries has been proposed by \pers{Cohen} \etal\
\cite{CDDK+12}, where first the ridge profile is approximated, and
afterwards the ridge vector with non-negative entries is recovered
using compressed sensing techniques.  Overall, the algorithm performs
comparable to the approximation of univariate, continuous functions
with respect to the approximation rate.  Another approach to tackle
the recovery problem is to exploit the directional derivatives
\begin{equation*}
  \iProdn{\nabla f(x)}{\phi} = \dot g(\iProd{a}{x}) \, \iProd{a}{x}
\end{equation*}
or, more precisely, to approximate them by finite differences.  Using
them, \pers{Fornasier}, \pers{Schnass} \& \pers{Vybíral} \cite{FSV12}
study vector-based ridge functions on the ball.  Their results have then
be extended to the cube by \pers{Kolleck} \& \pers{Vybíral}
\cite{KV15}.  In order to apply the tools from compress sensing, the
previous works assume that the ridge vector is sparse or nearly
sparse.  The algorithm proposed by \pers{Tyagi} \& \pers{Cevher}
\cite{TV12} uses techniques form low-rank matrix recovery---more
precisely, the \pers{Dantzig} selector---to overcome the
compressibility assumption.  Moreover, the recovery of vector-based
ridge functions has been studied by \pers{Mayer}, \pers{Ullrich} \&
\pers{Vybíral} \cite{MUV15}, who show that the derivative of the ridge
profile has to be bounded from below away from zero and that this
condition is necessary in order to reduce the sampling complexity.

Essentially, a vector-based ridge function may be interpreted as a
function of the distance to a $(d-1)$-dimensional subspace, \ie\
\begin{equation*}
  f(x) = g (\iProdn{a}{x}) = g(\dist(x,\mathcal A))
  \qquad\text{with}\qquad
  \mathcal A \coloneqq \{x : x \perp a\};
\end{equation*}
so one possible generalization is to replace the subspace by another
set.  Since the level sets of the resulting $f$ are now inflated
versions of $\mathcal A$ and do not look like a ridge anymore, this
type of functions has been called \emph{sleeve functions} by
\pers{Keiper} \cite{Kei19}.  More precisely, \pers{Keiper} defined a
sleeve function on the basis of the squared distance, \ie\
\begin{equation*}
  f(x) \coloneqq g \bigl( \dist(x, \mathcal A)^2 \bigr),
\end{equation*}
where $\mathcal A$ is a low-dimensional manifold in $\BR^d$ like an
$\ell$-dimensional subspace $L$ with $\ell < d$.  For the later case,
the gradient of $f$ becomes perpendicular to the underlying subspace
$L$ in analogy of the gradient of a vector-based ridge function.  This
implies that the tangent plane $\nabla f(x)^\perp$ contains $L$.  On
the basis of this observation \pers{Keiper} derived an adaptive
algorithm to recover $L$ by approximating the gradient $\nabla f$ at
random points by finite differences.  An alternative approach to learn
$L$ is to solve an optimization problem over the \PGrassmann[ian]
\cite{Kei19}.

\paragraph{Our Contribution}

The focus of the present work is to study sleeve functions that are
based on non-linear underlying structures.  More precisely, we are
interested in learning sleeve functions based on \PJordan\ arcs, which
are injective mappings $\gamma : [0,\ell(\gamma)] \to \BR^d$, where
$\ell$ is the length of the arc.  \PJordan\ arcs are thus
non-self-intersecting, finite-length curves.  We require that our
underlying \PJordan\ arc is at least twice continuously
differentiable---henceforth called (\PJordan) $C^2$-arc or
$C^2$-curve.  Formally, the corresponding sleeve function is defined
as follows.

\begin{definition}[Sleeve Function]
  Let $g$ be in $C^2([0,\infty])$, and let $\gamma : [0, \ell(\gamma)] \to
  \BR^d$ be a \PJordan\ $C^2$-arc.  The function $f : \BR^d \to \BR$ given by
  \begin{equation*}
    f(x) \coloneqq g \bigl( (\dist(x,\gamma))^2 \bigr)
  \end{equation*}
  is called a \emph{curve-based sleeve function}.
\end{definition}

Our main interest is to capture the \emph{sleeve profile} $g$ and the
\emph{underlying curve} $\gamma$ numerically from point and gradient
queries, where the gradient may also be approximated by finite
differences.  To analyse the proposed algorithm, we require that the
profile is bounded from below away from zero at the origin and that
the \PJordan\ curve remains non-self-intersecting if it is inflated to
some extend.  Our central contributions are the following:

\begin{itemize}
\item We propose an adaptive, two-step learning algorithm to
  approximate the profile and to recover the underlying curve on the
  basis of projections that are computed form function and gradient
  queries.  The algorithm always terminate and captures the underlying
  curve and profile up to given approximation errors.
\item We analyse the effect of inaccurately computed projections to the
  proposed method recovering the underlying curve and show that the
  additional error can be controlled under suitable assumptions.
\item We derive a uniform bound of the approximation error for
  the composed two steps of the proposed method.
\end{itemize}

\paragraph{Roadmap} After starting with some preliminaries in
Section~\ref{sec:preliminaries}, we introduce the concept of
well-separated \PJordan\ curves enforcing an extended
non-self-intersecting property and an bounded curvature.  The
consequences regarding projection and distance to a curve are studied
in Section~\ref{sec:well-separ-curv}.  Using the well-separation, we
calculate the maximal approximation error caused by replacing an arc
by a polygonal chain, see Section~\ref{sec:curves-poly-chain}.  In
Section~\ref{sec:reconstr-underly-cur}, we derive the second step of
our capturing algorithm, which recovers the underlying curve by
computing projections from function and gradient queries, and study
the influence of numerical errors during the projection.  Capturing
the sleeve profile, which builds the first step of the method, we
propose an adaptive learning algorithm and bound the corresponding
approximation error in Section~\ref{sec:ident-curve-sleeve}.  We
conclude with some numerical experiments in
Section~\ref{sec:numerical-example} showing that the method can be
implemented and considering some special cases.

\section{Preliminaries}
\label{sec:preliminaries}

To simplify the notation, we throughout---up to the numeric
section---assume that the \PJordan\ arc $\gamma$ is parameterized via
the arclength, \ie\ $\gamma : [0, \ell(\gamma)] \to \BR^d$, where
$\ell(\gamma)$ denotes the length.  As a consequence the tangent
vector is normalized, \ie\ $\pNormn{\dot\gamma(t)} = 1$ for
$t \in [0, \ell(\gamma)]$, and the norm of the second derivative
coincides with the (unsigned) curvature
$\kappa_\gamma(t) = \pNormn{\ddot\gamma(t)}$ for
$t \in [0, \ell(\gamma)]$.  The \emph{end points} of $\gamma$ are just
$\gamma(0)$ and $\gamma(\ell(\gamma))$.  We refer to the remaining
points $\gamma(t)$ with $t \in (0, \ell(\gamma))$ as \emph{inner
  point} of the curve.

The distance of a point $x$ to $\gamma$ is
now defined by $\dist(x, \gamma) \coloneqq \dist(x, \ran \gamma)$,
where $\ran$ denotes the image of $\gamma$ in $\BR^d$, and where
$\dist(x, A) \coloneqq \inf_{y \in A} \pNormn{x - y}$ for general
$A \subset \BR^d$ and the \PEuclid[ean] norm.  Based on the distance,
the projection onto $\gamma$ is defined as the set-valued map
\begin{equation*}
  \proj_\gamma(x) \coloneqq
  \bigl\{ y : \pNormn{x - y} = \dist(x, \gamma) \bigr\}.
  \addmathskip
\end{equation*}
Because of the closedness of $\ran \gamma$, the distance is always
attained; so the projection is never empty.  If the projection is
single-valued, we may interpret $\proj_\gamma(x)$ as vector or point
unstated, \ie\ we may write $y = \proj_\gamma (x)$ with
\raisebox{0pt}[0pt][0pt]{$y \in \BR^d$}.  If the (single-valued)
projection can only be computed up to $\epsilon > 0$, we use the
notation $y \approx_\epsilon \proj_\gamma(x)$, where
$y \approx_\epsilon z$ means $\pNormn{x-z} \le \epsilon$.  Denoting
the cardinality of a set by $\#[\cdot]$, we call $x$ \emph{ambiguous}
or an \emph{ambiguity point} if $\# [\proj_\gamma(x)] > 1$ and
\emph{unambiguous} or an \emph{unambiguity point} otherwise.

To measure the distance between two curves, we rely on the \PHausdorff\
distance.  More generally, the \PHausdorff\ distance between two
arbitrary sets $A \subset \BR^d$ and $B \subset \BR^d$ is defined as
\begin{equation*}
  d_{\id H}(A,B) \coloneqq \max \bigl\{ \sup_{x \in A} \dist(x,B), \sup_{y
    \in B} \dist(y, A) \bigr\}.
\end{equation*}
For two curves $\gamma$ and $\tilde\gamma$, we define the distance
between them as the \PHausdorff\ distance of their images, \ie\
\begin{equation*}
  \dist(\gamma, \tilde\gamma)
  \coloneqq d_{\id H}(\ran \gamma, \ran \tilde \gamma).
\end{equation*}

For two points $P,Q \in \BR^d$, we denote by
$\overrightarrow{PQ} = (PQ)^{\rightarrow}$ the vector from $P$ to $Q$,
\ie\ $(PQ)^\rightarrow = Q - P$ where $P$ and $Q$ are interpreted as
vectors themselves.  Likewise, we define the distance and projection
of a point to a curve as above.  The \PEuclid[ean] distance between
$P$ and $Q$ is just given by
$\dist(P,Q) \coloneqq \pNormn{(PQ)^\rightarrow}$.  The linear line
segment between $P$, $Q \in \BR^d$ is denoted by $[PQ]$. For points on
a curve, \ie\ $P = \gamma(t)$ and $Q = \gamma(s)$, the arc of $\gamma$
between them is denoted by $\gamma_{[PQ]} \coloneqq \gamma([t,s])$.

Besides the \PEuclid[ean] norm $\pNormn{\cdot}$, we use the
\PChebyshev\ norm $\pNormn{\cdot}_\infty$ for vectors and the
\PFrobenius\ norm $\FNormn{\cdot}$ for matrices. The unit vectors are
denoted by $e_n$ with $n = 1, \dots, d$, and the all-ones vector by
$\Vek 1$.  We refer to the identity matrix as $I$.  For the ball with
radius $\epsilon$ around $x$, we write $B_\epsilon(x)$ and, for the
open ball, $\mathring B_\epsilon(x)$.  The non-negative and
non-positive real numbers are denoted by $\BR_+$ and $\BR_-$
respectively---both contain zero.  The cone of a set $A \subset \BR^d$
is defined as $\cone A \coloneqq \{tx : x \in A, t \ge 0\}$.  Finally,
we denote by $C^r(X,Y)$ the $r$-times continuously differentiable
mappings from $X$ to $Y$.  If $X$ and $Y$ are obvious, we may write
$C^r(X)$ or even $C^r$.

\section{Well-Separation of \PJordan\ Curves}
\label{sec:well-separ-curv}

Since our approximation of the unknown curve $\gamma$ will be based on
projections, we have to ensure that these are well defined in a
neighbourhood of $\gamma$.  For this, the \PJordan\ curve is not
allowed to intersect itself even if it is inflated to some extend.  To
express this assumption mathematical, we use tangential and normal
cones.  For the image of a $C^2$-curve, the \emph{tangential cone}
\cite[Def~6.1]{RW09} becomes
\begin{equation*}
  T_\gamma (t)
  \coloneqq
  \begin{cases}
    \cone \{\dot\gamma(0+)\} & \text{if} \; t = 0,\\
    \Span \{\dot\gamma(t)\} & \text{if} \; t \in (0, \ell(\gamma)),\\
    \cone \{-\dot\gamma(\ell_\gamma-)\} & \text{if} \, t = \ell(\gamma);
  \end{cases}
\end{equation*}
so $T_\gamma(t)$ is usually spanned by the tangent and is the ray of the
left-hand or right-hand derivative at the end points.  The
\emph{(regular) normal cone} \cite[Prop~6.5]{RW09} is now given by
\begin{equation*}
  N_\gamma (t) \coloneqq
  \bigl\{
  v : \iProd{v}{w} \le 0 \; \text{for all} \; w \in T_\gamma(t).
  \bigr\};
\end{equation*}
so $N_\gamma(t)$ is usually the hyperplane orthogonal to the tangent
$\dot\gamma(t)$ and a half-space at the end points.

\begin{definition}[Well-Separation]
  A \PJordan\ curve $\gamma \colon [0, \ell(\gamma)] \to \BR^d$ is
  $\rho$-separated if
  \begin{equation*}
    \mathring B_\rho (\gamma(t) + \rho v) \cap \ran \gamma = \emptyset
  \end{equation*}
  for all $t \in [0, \ell(\gamma)]$ and $v \in N_\gamma(t)$ with
  $\pNormn{v} = 1$.
\end{definition}

Figuratively, the well-separation means that $\gamma$ is not allowed
to intersect with an inflated circle around the curve.  Henceforth, we
call the inflated circle
\begin{equation*}
  R_\gamma (t) \coloneqq
  \bigcup_{\substack{v \perp \dot\gamma(t)\\ \pNormn{v} = 1}}
  \mathring B_\rho (\gamma(t) + \rho v)
\end{equation*}
the \emph{normal ring} of $\gamma$ at $t\in (0, \ell(\gamma))$.  In
three dimensions, the normal ring looks like a threaded horn torus or
water wing.  At the end points, the normal ring is closed with a
half-ball.  Up to a local neighbourhood, the points of $\gamma$ are
thus well-separated by a distance of $2\rho$ at the least.  Further,
the curvature of $\gamma$ is bounded by $\nicefrac1\rho$ since the
osculating circle would then be included in the boundary of the normal
ring.

The well-separation ensures the single-valueness of the projection
within a certain neighbourhood of the curve.

\begin{theorem}[Single-Valued Projection]
  \label{thm:single-proj}
  Let the \PJordan\ $C^2$-arc $\gamma : [0, \ell(\gamma)] \to \BR^d$ be
  $\rho$-separated, and let $x \in \BR^d$ be a point with
  $\dist(x, \gamma) < \rho$.  The projection $\proj_\gamma(x)$ is then
  single-valued and thus unique in an open neighbourhood of $x$.
\end{theorem}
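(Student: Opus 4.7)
My plan is to argue by contradiction: assume $x$ with $r := \dist(x,\gamma) < \rho$ admits two distinct projection points $y_1 = \gamma(t_1)$ and $y_2 = \gamma(t_2)$, and exhibit a point of $\ran \gamma$ strictly inside one of the forbidden balls from the $\rho$-separation hypothesis. The case $r = 0$ is trivial because injectivity of $\gamma$ gives $\proj_\gamma(x) = \{x\}$, so I may assume $r > 0$ and normalize $v_i := (x - y_i)/r$.

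The first key step is to identify the geometry of the secant $x - y_i$: I claim $v_i$ is a unit vector in the normal cone $N_\gamma(t_i)$. For an inner parameter $t_i \in (0, \ell(\gamma))$, the smooth function $s \mapsto \pNormn{x - \gamma(s)}^2$ has a local minimum at $t_i$, so its vanishing derivative gives $\iProdn{x - y_i}{\dot\gamma(t_i)} = 0$; hence $v_i$ lies in the hyperplane orthogonal to $\dot\gamma(t_i)$, which is $N_\gamma(t_i)$. For $t_i \in \{0, \ell(\gamma)\}$ only one-sided differentiability is available, and one-sided minimality translates into $\iProdn{x - y_i}{w} \le 0$ for every $w \in T_\gamma(t_i)$, placing $v_i$ in the half-space $N_\gamma(t_i)$.

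Next, I apply $\rho$-separation at $t_1$: the ball $\mathring B_\rho(\gamma(t_1) + \rho v_1)$ is disjoint from $\ran \gamma$. Writing its centre as $c_1 = y_1 + \rho v_1 = x + (\rho - r) v_1$ and using $y_2 = x - r v_2$, a direct expansion yields
\begin{equation*}
  \pNormn{y_2 - c_1}^2
  = r^2 + (\rho - r)^2 + 2\, r (\rho - r) \iProdn{v_1}{v_2}
  \le \rho^2,
\end{equation*}
with equality precisely when $\iProdn{v_1}{v_2} = 1$, \ie\ $v_1 = v_2$. However, $v_1 = v_2$ together with $y_1 + r v_1 = x = y_2 + r v_2$ forces $y_1 = y_2$, contradicting the assumption. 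Hence the inequality is strict, so $y_2 \in \mathring B_\rho(c_1) \cap \ran \gamma$, contradicting well-separation.

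The main obstacle is making the normal-cone step watertight at the endpoints, where one-sided differentiability of $\gamma$ must be reconciled with the half-space form of $N_\gamma(t)$; the interior case is a routine first-order calculation, but the boundary case requires carefully invoking the definitions of $T_\gamma$ and $N_\gamma$ given just before the theorem. For the neighbourhood statement, continuity of $\dist(\cdot, \gamma)$ produces an open ball around $x$ on which $\dist(\cdot, \gamma) < \rho$ still holds, and the preceding argument applies pointwise there, so the single-valuedness propagates to the whole neighbourhood.
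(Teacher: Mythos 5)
Your proof is correct and takes essentially the same route as the paper: both identify $(x - y_i)/\pNormn{x-y_i}$ as a unit normal, invoke the forbidden $\rho$-ball $\mathring B_\rho(y_1 + \rho v_1)$ from well-separation, and observe that any second projection point would land strictly inside it; your inner-product expansion $\pNormn{y_2 - c_1}^2 = r^2 + (\rho-r)^2 + 2r(\rho-r)\iProdn{v_1}{v_2} \le \rho^2$ is just the algebraic form of the paper's ``$B_\sigma(x)$ sits inside $B_\rho$ and touches its boundary only at $P$.'' The only cosmetic difference is that you prove the normal-cone membership directly from first-order optimality (including the one-sided endpoint case) rather than citing the variational-analysis reference, and you phrase the conclusion as a contradiction rather than as a direct containment.
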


\begin{proof}
  If $P = \gamma(t)$ is contained in $\proj_\gamma(x)$, then $x-P$ has
  to be in the normal cone $N_\gamma(t)$, see \cite[Ex~6.16]{RW09}.
  Therefore, the ball $B_\sigma \coloneqq B_\sigma (x)$ with radius
  $\sigma \coloneqq \pNormn{x-P} < \rho$ is contained in
  $B_\rho \coloneqq B_\rho (P + \nicefrac{(x - P)}{\pNormn{x - P}})$,
  which is the closure of a ball in the normal ring.  Since the
  interior of $B_\rho$ contains no points of $\gamma$, and since
  $B_\sigma$ touches $\uppartial B_\rho$ at exactly one point, the
  intersection $B_\sigma \cap \ran \gamma$ consists only of the point
  $P$ showing that the projection is single-valued.  The same
  argumentation holds for all points in $\mathring B_\epsilon(x)$ with
  $\epsilon < \rho - \dist(x,\gamma)$.  \qed
\end{proof}

More general, if $x \in \BR^d$ is no ambiguity point and does not lie
on $\gamma$, we always find a small neighbourhood where the projection
is single-valued too.  Moreover, the set of ambiguity points is a set
of measure zero.  To study the set of ambiguity points, we use that
the projection and distance onto and to a \PJordan\ $C^2$-curve is
differentiable for most unambiguity points.  Both results can be found
in \cite{DH94}, and we state them for our specific setting with
$C^2$-curves.

\begin{theorem}[{\pers{Dudek--Holly} \cite[Thm~4.1]{DH94}}]
  \label{thm:diff-proj}
  Let $\gamma$ be a \PJordan\ $C^2$-arc, and let $x \in \BR^d$ be a point
  within an open neighbourhood where the projection is single-valued.
  If $\proj_\gamma(x)$ in an inner point, then $\proj_\gamma$ is
  differentiable at $x$.
\end{theorem}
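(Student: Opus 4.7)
The plan is to apply the implicit function theorem (IFT) to the first-order optimality equation of the projection. Set $P \coloneqq \proj_\gamma(x) = \gamma(t_0)$ with $t_0 \in (0, \ell(\gamma))$, and consider the $C^1$ map
\[
 F : \BR^d \times (0, \ell(\gamma)) \to \BR,
 \qquad F(y, t) \coloneqq \iProd{y - \gamma(t)}{\dot\gamma(t)}.
\]
Because $P$ is an inner point, the first-order optimality condition yields $x - P \perp \dot\gamma(t_0)$, i.e., $F(x, t_0) = 0$. Using the arclength normalization $\pNormn{\dot\gamma(t)} = 1$, a direct computation gives
\[
 \partial_t F(y, t) = -1 + \iProd{y - \gamma(t)}{\ddot\gamma(t)} = -\psi_y''(t),
\]
where $\psi_y(t) \coloneqq \tfrac{1}{2} \pNormn{y - \gamma(t)}^2$. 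Once the non-degeneracy $\partial_t F(x, t_0) \neq 0$ is granted, the IFT supplies an open neighbourhood $V$ of $x$ together with a $C^1$ branch $\tau : V \to (0, \ell(\gamma))$ satisfying $\tau(x) = t_0$ and $F(y, \tau(y)) = 0$. The differentiability claim then reduces to identifying $\gamma \circ \tau$ with $\proj_\gamma$ on $V$.

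The main obstacle is the strict inequality $\psi_x''(t_0) > 0$: the weak bound $\psi_x''(t_0) \geq 0$ is immediate from $t_0$ being an interior minimizer of $\psi_x$, but equality cannot be excluded by the minimum property alone. This is precisely where the hypothesis of single-valuedness throughout a full open neighbourhood of $x$---and not only at the single point $x$---must enter. Assuming $\psi_x''(t_0) = 0$, the \pers{Taylor} expansion
\[
 \pNormn{y - \gamma(t_0 + s)}^2 = \pNormn{y - \gamma(t_0)}^2 - 2s \iProd{y - \gamma(t_0)}{\dot\gamma(t_0)} + s^2 \bigl( 1 - \iProd{y - \gamma(t_0)}{\ddot\gamma(t_0)} \bigr) + O(s^3)
\]
identifies $x$ as a focal point of $\gamma$ at $t_0$. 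Perturbing $x$ along $\ddot\gamma(t_0)$ keeps the linear coefficient zero (using $\iProd{\ddot\gamma(t_0)}{\dot\gamma(t_0)} = 0$ from arclength parameterization together with $\iProd{x - P}{\dot\gamma(t_0)} = 0$) while flipping the sign of the quadratic coefficient. A standard focal-set argument then produces points $y$ arbitrarily close to $x$ for which $\psi_y$ admits two distinct global minimizers, contradicting single-valuedness on the hypothesized neighbourhood. This forces $\psi_x''(t_0) > 0$.

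With the non-degeneracy secured, the IFT delivers $V$ and $\tau$ as above. Shrinking $V$ so that $\psi_y''(\tau(y)) > 0$ holds uniformly on $V$ by continuity, the value $\tau(y)$ becomes a strict local minimizer of $\psi_y$ for every $y \in V$. Continuity of the single-valued projection on the hypothesized neighbourhood---obtained from the outer semicontinuity of $\proj_\gamma$ as a multifunction combined with single-valuedness---ensures that for $y$ close enough to $x$ the global minimizer of $\psi_y$ stays close to $t_0$, and must therefore coincide with the unique critical point $\tau(y)$ provided by the IFT. Hence $\gamma(\tau(y)) = \proj_\gamma(y)$ on a (possibly smaller) neighbourhood of $x$, and the chain rule yields differentiability of $\proj_\gamma = \gamma \circ \tau$ at $x$.
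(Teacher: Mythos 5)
The paper does not contain a proof of this statement; it is imported verbatim (as Theorem~4.1) from Dudek and Holly \cite{DH94}, and the manuscript uses it as a black box. So there is no in-paper proof to compare against. That said, your reconstruction is worth assessing on its own terms.

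The scaffolding is sound and is essentially the standard route: reduce the projection to the stationarity equation $F(y,t) = \iProd{y - \gamma(t)}{\dot\gamma(t)} = 0$, compute $\partial_t F(y,t) = -\psi_y''(t) = -1 + \iProd{y-\gamma(t)}{\ddot\gamma(t)}$ using the arclength normalization, invoke the implicit function theorem once the non-degeneracy $\psi_x''(t_0) > 0$ is available, and then use continuity of the single-valued projection to identify the IFT branch with the true global minimizer on a smaller neighbourhood. All of this is correct.

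The gap is exactly where you flag it, in the non-degeneracy step, and the gap is genuine rather than cosmetic. You establish that if $\psi_x''(t_0) = 0$ then for $y_\epsilon = x + \epsilon\,\ddot\gamma(t_0)/\pNormn{\ddot\gamma(t_0)}$ the parameter $t_0$ remains a critical point with $\psi_{y_\epsilon}''(t_0) < 0$, so $t_0$ is now a strict local \emph{maximum} of $\psi_{y_\epsilon}$. But from this alone it does not follow that $\psi_{y_\epsilon}$ has two distinct global minimizers: the global minimum could simply relocate to a single parameter on one side of $t_0$, leaving the projection single-valued. That would not contradict the hypothesis. What you actually need is the assertion that the focal locus is contained in the closure of the ambiguity set (equivalently, that the cut distance along a normal never exceeds the first focal distance), and you invoke this as ``a standard focal-set argument'' without supplying it. This is the crux of the theorem rather than a routine step: it is precisely the place where the hypothesis ``single-valued on a full open neighbourhood'' carries its weight, and dispensing with it by citation rather than proof leaves the argument incomplete. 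In dimension $d \ge 3$ there is a secondary issue you should also address: $\ddot\gamma(t_0)$ spans only one direction in the $(d-1)$-dimensional normal space, and when $\ddot\gamma(t_0) = 0$ the perturbation direction is undefined (fortunately in that case $\psi_x''(t_0) = 1 \neq 0$ trivially, but this should be stated). To repair the proof one would either carry out the focal/cut locus comparison in detail for $C^2$ curves, or follow Dudek--Holly and work with the normal exponential map directly, characterising the single-valued region as the set where this map is a local diffeomorphism.
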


The restriction to a point that is projected to an inner point is here
crucial since the projection becomes undifferentiable at the end points. 

\begin{counterexample}[End Points]
  Consider the curve or line segment $\gamma(t) \coloneqq (t,0)^\T$
  with $t \in [0,1]$.  For $x \coloneqq (0,1)^\T$, the derivative in
  direction $(1,0)^\T$ is $(1,0)^\T$ but $(0,0)^\T$ in direction
  $(-1,0)^\T$.  Thus the projection is not differentiable at points
  $x \coloneqq \gamma(0) + v$ with $v \perp \dot\gamma(0+)$ and
  $\proj_\gamma(x) = \gamma(0)$.  \qed
\end{counterexample}

The distance to a curve is obviously not differentiable for points on
the curve.  In difference to the projection, the distance is
differentiable for points projected onto the end points.

\begin{theorem}[{\pers{Dudek--Holly}, \cite[Prop~4.4]{DH94}}]
  \label{thm:diff-dist}
  Let $\gamma$ be a \PJordan\ $C^2$-arc, and let $x \in \BR^d$ be a point
  within an open neighbourhood where the projection is single-valued.
  If $x \not\in \ran \gamma$, then $\dist_\gamma$ is differentiable at
  $x$ with
  \begin{equation*}
    \nabla_x \dist(x, \gamma) =
    \frac{x - \proj_\gamma(x)}{\pNormn{x - \proj_\gamma(x)}}.
  \end{equation*}
\end{theorem}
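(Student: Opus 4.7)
The plan is to pinch the squared distance $f(\cdot)^2$ with $f \coloneqq \dist(\cdot,\gamma)$ by two inequalities, one for each direction of ``swap the projection'', and to combine them via continuity of the projection. This avoids any appeal to differentiability of $\proj_\gamma$ itself, so that---unlike in Theorem~\ref{thm:diff-proj}---no restriction to inner projection points is needed.

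The only preparation is the continuity of $\proj_\gamma$ at $x$. Since the projection is single-valued on an open neighbourhood of $x$, the distance $f$ is continuous everywhere, and $\ran \gamma$ is compact, any sequence $x_n \to x$ has its image $\proj_\gamma(x_n)$ trapped in a compact set. Every convergent subsequence has a limit $P_\infty \in \ran \gamma$ realising $\pNormn{x - P_\infty} = f(x)$, so by single-valuedness $P_\infty = \proj_\gamma(x)$ and the full sequence converges to $\proj_\gamma(x)$.

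For the pinch, using $\proj_\gamma(x)$ as a suboptimal candidate for $x+h$ gives
\begin{equation*}
  f(x+h)^2 \;\le\; \pNormn{x+h - \proj_\gamma(x)}^2 \;=\; f(x)^2 + 2 \iProd{h}{x - \proj_\gamma(x)} + \pNormn{h}^2,
\end{equation*}
and using $\proj_\gamma(x+h)$ as a candidate for $x$ gives $f(x)^2 \le \pNormn{x - \proj_\gamma(x+h)}^2$. Expanding $\pNormn{x - \proj_\gamma(x+h)}^2 = \pNormn{(x+h - \proj_\gamma(x+h)) - h}^2$ and decomposing $x+h - \proj_\gamma(x+h) = (x - \proj_\gamma(x)) + h + (\proj_\gamma(x) - \proj_\gamma(x+h))$, the resulting lower bound on $f(x+h)^2$ matches the upper bound except for a cross term $2\iProd{h}{\proj_\gamma(x) - \proj_\gamma(x+h)}$, which by Cauchy--Schwarz and the continuity just established is $o(\pNormn{h})$. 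Hence
\begin{equation*}
  f(x+h)^2 = f(x)^2 + 2 \iProd{h}{x - \proj_\gamma(x)} + o(\pNormn{h}).
\end{equation*}

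Finally, $x \notin \ran \gamma$ yields $f(x) > 0$, so linearising the square root at $f(x)^2$ produces the first-order expansion
\begin{equation*}
  f(x+h) - f(x) = \frac{\iProd{h}{x - \proj_\gamma(x)}}{f(x)} + o(\pNormn{h}),
\end{equation*}
which identifies the gradient as asserted. The main obstacle is only the continuity of $\proj_\gamma$ at $x$: once it is in place, the pinch is automatic, and the asymmetry between projecting at $x$ and at $x+h$ vanishes at exactly the rate needed to be absorbed into the $o(\pNormn{h})$ remainder.
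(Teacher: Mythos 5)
Your proposal is correct, and it takes a genuinely different route from the paper. The paper's proof defers to \cite{DH94} for points that project to inner points, handles points projecting to an end point separately (the distance is then a Euclidean distance), and glues the two regimes at the boundary points $\gamma(0) + v$, $v \perp \dot\gamma(0+)$, by appealing to continuity of the projection. Your argument is instead self-contained and uniform: you first establish continuity of $\proj_\gamma$ at $x$ by a compactness-plus-uniqueness argument, then sandwich $f(x+h)^2$ by using $\proj_\gamma(x)$ as a candidate competitor for $x+h$ (upper bound) and $\proj_\gamma(x+h)$ as a competitor for $x$ (lower bound); the two bounds agree up to the cross term $2\iProd{h}{\proj_\gamma(x) - \proj_\gamma(x+h)}$, which is $o(\pNormn{h})$ by Cauchy--Schwarz and the continuity just proved, and the remaining $\pNormn{h}^2$ is also $o(\pNormn{h})$. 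Since $x \notin \ran\gamma$ gives $f(x) > 0$, linearising the square root produces the first-order expansion and hence the stated gradient. The key structural advantage of your approach is that it never needs differentiability of the projection---only its continuity---so the case distinction inner-point/end-point disappears entirely; this also makes transparent why the distance remains differentiable at exactly the points where, as the Counterexample shows, the projection fails to be.
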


\begin{proof}
  For points $x$ projected to inner points, then statement has been
  established in \cite{DH94}.  If $x$ is in a neighbourhood projected
  to one end point, the statement is clear since the distance becomes
  the \PEuclid{ean} distance to the end point.  The interesting points
  are thus $x \coloneqq \gamma(0) + v$ with $v \perp \dot\gamma(0+)$ and
  $\proj_\gamma(x) = \gamma(0)$.  On one side of the affine hyperplane
  corresponding to $\dot\gamma(0)$ the points are projected to an inner
  point and on the other side to the end point.  The corresponding
  derivatives are
  \begin{equation*}
    \nabla_y \dist(y,\gamma) = \frac{y - \proj_\gamma(y)}{\pNormn{y -
        \proj_\gamma(y)}}
    \qquad\text{and}\qquad
    \nabla_y \dist(y,\gamma) = \frac{y - \gamma(0)}{\pNormn{y -
        \gamma(0)}}.
  \end{equation*}
  For $y \to x$ in both half-spaces, the derivatives converges to the
  same value since the projection is continuous.  \qed
\end{proof}

The ambiguity points with respect to a \PJordan\  $C^2$-curve have
a benign structure.  The restriction
$A_2 \coloneqq \{ x \in \BR^d : \#[ \proj_\gamma(x)] = 2 \}$ of the
ambiguity set
$A \coloneqq \{ x \in \BR^d : \#[ \proj_\gamma(x)] > 1 \}$ consisting
of all the points with exactly two projections has \PLebesgue\ measure
zero. 

\begin{lemma}[Ambiguity Points]
  \label{lem:2-amb-pts}
  Let $\gamma$ be a finite-length \PJordan\ $C^2$-arc.  Then the subset
  $A_2 \coloneqq \{ x \in \BR^d : \#[ \proj_\gamma(x)] = 2 \}$ has
  \PLebesgue\ measure zero.
\end{lemma}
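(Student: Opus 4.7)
The plan is to identify $A_2$ with a subset of the points at which $\dist(\cdot,\gamma)$ fails to be differentiable, and then to invoke \pers{Rademacher}'s theorem. Since $|\dist(x,\gamma) - \dist(y,\gamma)| \le \pNormn{x - y}$ follows from the triangle inequality, the distance function is $1$-\pers{Lipschitz} on $\BR^d$ and hence differentiable outside a \PLebesgue\ null set. It therefore suffices to show that no $x \in A_2$ is a point of differentiability of $\dist(\cdot,\gamma)$.

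The key ingredient is a one-sided gradient bound that is valid for \emph{any} closest point, without the single-valuedness hypothesis of Theorem~\ref{thm:diff-dist}: for $x$ with $\dist(x,\gamma) > 0$, every $P \in \proj_\gamma(x)$, every $v \in \BR^d$, and every $\epsilon > 0$,
\begin{equation*}
  \dist(x + \epsilon v, \gamma) \le \pNormn{x + \epsilon v - P}.
\end{equation*}
Subtracting $\dist(x,\gamma) = \pNormn{x - P}$, dividing by $\epsilon$, and letting $\epsilon \to 0^+$, one obtains
\begin{equation*}
  \limsup_{\epsilon \to 0^+} \frac{\dist(x + \epsilon v, \gamma) - \dist(x,\gamma)}{\epsilon}
  \le \frac{\iProdn{x - P}{v}}{\pNormn{x - P}}.
\end{equation*}
If $\dist(\cdot,\gamma)$ were differentiable at $x$ with gradient $g$, then applying this bound once to $v$ and once to $-v$ would force $g = (x - P)/\pNormn{x - P}$ for \emph{every} $P \in \proj_\gamma(x)$.

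Now for $x \in A_2$ there are two distinct closest points $P_1 \ne P_2$ at common distance $\dist(x,\gamma) > 0$, so the two candidate gradients $(x - P_1)/\dist(x,\gamma)$ and $(x - P_2)/\dist(x,\gamma)$ are distinct vectors, contradicting differentiability. Hence $A_2$ lies in the \PLebesgue\ null set on which $\dist(\cdot,\gamma)$ is not differentiable, and the claim follows. The only step requiring care is the one-sided comparison: it must be stated without the single-valued-projection hypothesis of Theorem~\ref{thm:diff-dist}, since that hypothesis is exactly what fails on $A_2$; this is however precisely what allows the argument to deliver two distinct candidate gradients at each ambiguity point.
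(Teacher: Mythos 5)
Your proof is correct, but it takes a genuinely different route from the paper. The paper localizes around an ambiguity point, writes the distance as the minimum of the distances to two small sub-arcs around $P_1$ and $P_2$, uses the differentiability of those distances (via \thref{thm:diff-proj} and \thref{thm:diff-dist}) to apply the implicit function theorem to $F(y)=\dist(y,\gamma_1)-\dist(y,\gamma_2)$, and so exhibits $A_2$ locally as a $C^1$ hypersurface, which is then covered by countably many such charts. You instead note that $\dist(\cdot,\gamma)$ is $1$-Lipschitz, invoke \pers{Rademacher}'s theorem, and show through the one-sided comparison $\dist(x+\epsilon v,\gamma)\le\pNormn{x+\epsilon v-P}$ that differentiability at $x$ forces the gradient to equal $(x-P)/\pNormn{x-P}$ for \emph{every} $P\in\proj_\gamma(x)$, which is impossible once two distinct projections exist; your implicit use of $\dist(x,\gamma)>0$ for $x\in A_2$ is harmless, since a point of the \PJordan\ arc is its own unique projection. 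Your argument buys generality and economy: it requires no $C^2$ regularity---it works verbatim for an arbitrary closed set---and, since it only uses the existence of two distinct nearest points, it proves in one stroke the stronger statement of \thref{thm:amb-pts}, that the full ambiguity set $A$ is \PLebesgue\ null. What it does not deliver is the structural information the paper's route provides, namely that $A_2$ is locally a $C^1$ hypersurface; the paper leans on exactly that picture after \thref{prop:amb-pts} to argue that the closure $\overline{A_2}=A$ remains a null set (a conclusion your argument reaches directly, but without the geometric description of the ambiguity set as charts glued along higher-order ambiguities).
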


\begin{proof}
  Let $x$ be an ambiguity point in $A_2$ with projection $P_1$ and
  $P_2$.  Since the distance function is continuous, we find a small
  open neighbourhood $U_x$ such that $\dist(y,\gamma)$ is attained by
  a curve point near $P_1$ and/or $P_2$, \ie\
  $\dist(y,\gamma) = \min\{ \dist(y,\gamma_1), \dist(y,\gamma_2) \}$,
  where $\gamma_1$ and $\gamma_2$ are small arcs around $P_1$ and
  $P_2$.  Further $U_x$ may be chosen small enough such that the
  projection to a single arc $\gamma_1$ or $\gamma_2$ is single-valued
  in $U_x$ such that $\proj_{\gamma_1}$ and $\proj_{\gamma_2}$ become
  continuously differentiable by \thref{thm:diff-proj}.  The ambiguity
  points in $U_x$ are the zeros of the function
  \begin{equation*}
    F \colon U_x \to \BR :
    y \mapsto \dist(y,\gamma_1) - \dist(y, \gamma_2).
  \end{equation*}
  Because the gradient
  \begin{equation*}
    \nabla F(x) = \frac{x - P_1}{\pNormn{x - P_1}}
    - \frac{x - P_2}{\pNormn{x - P_2}}
    \addmathskip
  \end{equation*}
  is non-zero, $P_1$ and $P_2$ are distinct points, \PDini's implicit
  function theorem \cite[Thm 1B.1]{DR09} states that the ambiguity set
  $A_2$ in an open neighbourhood $\tilde U_x \subset U_x$ around $x$
  is the realization of a continuously differentiable map
  $a_x \colon \BR^{d-1} \to \BR^d$ and thus a \PLebesgue\ zero set by
  \PSard' theorem \cite{Sar42}.  Since the \PEuclid{ean} $\BR^d$ is
  second-countable, already countably many set $U_{x_n}$ cover $A_2$,
  whose union is again a \PLebesgue\ zero set.  \qed
\end{proof}

\begin{proposition}[Ambiguity Points]
  \label{prop:amb-pts}
  Let $\gamma$ be a \PJordan\ $C^2$-arc.  Then the ambiguity set
  $A \coloneqq \{ x \in \BR^d : \#[ \proj_\gamma(x)] > 1 \}$ is the
  closure of $A_2$.
\end{proposition}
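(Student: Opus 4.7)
The plan is to prove the two inclusions $\overline{A_2} \subseteq A$ and $A \subseteq \overline{A_2}$ separately. Since $A_2 \subseteq A$ is immediate, the first inclusion amounts to showing that $A$ is closed, while the second is the substantive part and will rely on a perturbation construction.

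For $\overline{A_2} \subseteq A$, I would take a sequence $x_n \to x$ with $x_n \in A_2$ having distinct projections $P_n, Q_n \in \proj_\gamma(x_n)$. Using compactness of $\ran \gamma$ and continuity of $\dist(\cdot, \gamma)$, I pass to a subsequence so that $P_n \to P$ and $Q_n \to Q$, with both $P,Q \in \proj_\gamma(x)$. In the generic case $P \neq Q$, the point $x \in A$ immediately. In the degenerate case $P = Q =: \gamma(t_0)$, I would exploit that each $x_n$ lies on the perpendicular bisector of the segment $[P_n Q_n]$ and that the chord direction $(P_n - Q_n)/\pNormn{P_n - Q_n}$ converges to $\pm \dot\gamma(t_0)$; the limit then gives $x - P \perp \dot\gamma(t_0)$, so $x - P \in N_\gamma(t_0)$, and a second-order expansion using $\ddot\gamma(t_0)$ couples $\pNormn{x - P}$ to the local curvature in a way that forces an additional curve point to be equidistant from $x$.

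For $A \subseteq \overline{A_2}$, if $x \in A_2$ there is nothing to show, so I assume $\proj_\gamma(x)$ contains at least three distinct points and construct a perturbation $x_\epsilon := x + \epsilon v$ that keeps only two of them as global projections. The expansion
\begin{equation*}
  \pNormn{x_\epsilon - \gamma(t)}^2
  = \pNormn{x - \gamma(t)}^2
    - 2\epsilon \iProdn{v}{\gamma(t) - x}
    + O(\epsilon^2)
\end{equation*}
shows that the first-order change in the squared distance to any curve point $\gamma(t)$ is controlled by $\iProdn{v}{\gamma(t) - x}$. Picking two distinguished projections $P_1, P_2$ and choosing $v$ on the bisecting hyperplane of the rays $P_1 - x$ and $P_2 - x$ while strictly separating these two from all remaining projections $P_i$ gives $\iProdn{v}{P_1 - x} = \iProdn{v}{P_2 - x} > \iProdn{v}{P_i - x}$. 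Combined with upper semi-continuity of the projection, this ensures that for all sufficiently small $\epsilon > 0$ the global projections of $x_\epsilon$ consist of exactly two points near $P_1$ and $P_2$, hence $x_\epsilon \in A_2$ and $x \in \overline{A_2}$.

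The hard part will be the degenerate collapse in the first inclusion, where the two projections $P_n, Q_n$ of the approximating sequence merge at a single focal point. Establishing that such a limit $x$ must nonetheless carry a second projection requires a genuinely second-order local analysis at $\gamma(t_0)$ and a careful use of the $C^2$ regularity; I expect this step to be the technical heart of the argument, while the perturbation construction for the second inclusion is comparatively routine.
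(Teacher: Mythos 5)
Your argument for the substantive inclusion $A \subseteq \overline{A_2}$ is essentially the paper's argument in analytic clothing: the paper deforms the sphere through $\proj_\gamma(x)$ (shrink and move it into a gap between projection points, or enlarge and move it outwards) so that only two projections survive, and your choice of a direction $v$ that equalizes $\iProdn{v}{P_1-x}=\iProdn{v}{P_2-x}$ while strictly dominating all other projections is exactly that deformation to first order. Two caveats remain, both fixable and of the same informal flavour as the paper's own sketch: first-order equality does not give $\dist(x_\epsilon,\gamma_1)=\dist(x_\epsilon,\gamma_2)$ exactly, so the point $x+\epsilon v$ as written need not lie in $A_2$ --- you need an additional balancing step (an intermediate-value argument in a second perturbation parameter, which is what the paper's ``controlling the radius'' amounts to); and for infinite projection sets in $d\ge 3$ the existence of a functional whose maximum over $\proj_\gamma(x)$ is attained at \emph{exactly} two points, as well as the claim that only two (and not several) projections survive near $P_1$ and $P_2$, needs justification.

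The genuine gap is in your first inclusion, precisely at the step you yourself flag as the technical heart: it cannot be closed, because the claim is false. When $P_n$ and $Q_n$ merge at $\gamma(t_0)$, the second-order expansion you envisage only shows that $x$ is a focal point, $\pNormn{x-\gamma(t_0)} = 1/\kappa_\gamma(t_0)$; it does not force a second equidistant curve point. Concretely, for the arc $\gamma(t)=(t,\nicefrac{t^2}2)$, $t\in[-1,1]$, and $x=(0,1)$ one has $\pNormn{x-\gamma(t)}^2 = 1+\nicefrac{t^4}4$, so $x$ has the unique projection $\gamma(0)$, while each point $(0,1+\epsilon)$ with small $\epsilon>0$ has exactly the two projections $\gamma(\pm\sqrt{2\epsilon})$; hence $x\in\overline{A_2}\setminus A$ and $\overline{A_2}\subseteq A$ fails. (The paper's own one-sentence justification of this direction --- continuity of the distance --- overlooks the same focal-point phenomenon; the non-degenerate case $P\ne Q$ of your limit argument is the only part that continuity actually gives.) So rather than trying to repair the degenerate collapse, you should either prove only $A\subseteq\overline{A_2}$, which is all that the measure-zero statement in Theorem~\ref{thm:amb-pts} requires, or add a hypothesis excluding focal points, e.g.\ restrict to points with $\dist(x,\gamma)<\rho$ for a $\rho$-separated curve, where the curvature bound $\kappa_\gamma\le\nicefrac1\rho$ rules out the merging of $P_n$ and $Q_n$ below distance $\rho$.
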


\begin{proof}
  Since the distance to the curve is continuous, the points in
  $\overline A_2$ are ambiguous.  To show $A \subset \overline A_2$,
  we take an ambiguity point $x$ with $\#[\proj_\gamma(x)] > 2$.  In
  two dimensions, the set $\proj_\gamma(x)$ is located on a circle.
  Since $\gamma$ is not closed, we can either shrink the circle and
  move it into a gap between to projection points or, if
  $\proj_\gamma(x)$ lie on a half-sphere, we can move the circle
  outwards and enlarge it, see Figure~\ref{fig:amb}.  In both cases,
  the center $y$ of the deformed circle is contained in $A_2$.  By
  controlling the radius, the center may be arbitrary close to $x$.
  This construction generalizes to $\BR^d$ by changing the radius and
  moving the sphere containing the projection points in several steps.
  \qed
\end{proof}

\begin{figure}[t]
  \centering
  \subfloat[Shrink and move the circle into a gap.]
  {\includegraphics{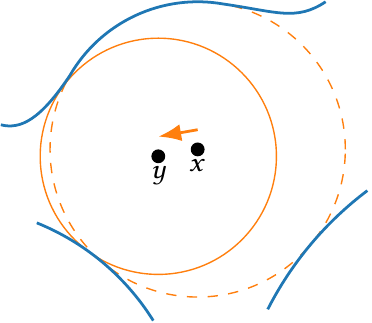}}
  \qquad
  \subfloat[Enlarge and move the circle outwards.]
  {\includegraphics{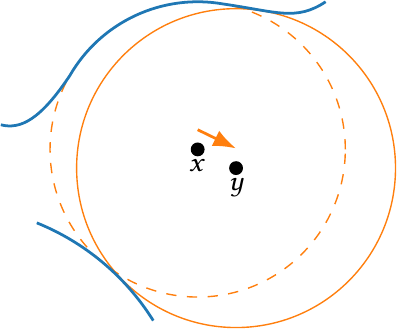}}
  \caption{The projections of the ambiguity point $x$ onto the
    {\color{Pri}curve $\gamma$} lie on a {\color{Sec}circle}.
    Changing the radius and moving the circle around, we find an
    arbitrary close point with exactly two projection.}
  \label{fig:amb}
\end{figure}

Figuratively, higher ambiguities with $\#[\proj_\gamma(x)] > 2$ occur
at points, where the charts constructed by the implicit function
theorem are glued together.  Since $A_2$ is locally a hypersurface,
the \PLebesgue\ measure of the closure remains
zero.  On the basis of a different argumentation, the statement has
already by shown by \pers{Hastie} \& \pers{Stuetzle} \cite{Has84,HS89}
for smooth curves.

\begin{theorem}[\pers{Hastie}--\pers{Stuetzle}, {\cite[Prop~6]{HS89}}]
  \label{thm:amb-pts}
  Let $\gamma$ be a \PJordan\ $C^2$-arc.  Then the ambiguity set
  $A \coloneqq \{ x \in \BR^d : \#[ \proj_\gamma(x)] > 1 \}$ has
  \PLebesgue\ measure zero.
\end{theorem}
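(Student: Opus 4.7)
The plan is to circumvent \thref{prop:amb-pts} directly---since the closure of a \PLebesgue\ null set can fail to be null---and instead exploit the regularity of the distance function via \pers{Rademacher}'s theorem. The map $x \mapsto \dist(x, \gamma)$ is $1$-Lipschitz on $\BR^d$, so the set $N \subset \BR^d$ where it fails to be classically differentiable has \PLebesgue\ measure zero. It therefore suffices to prove that every ambiguity point lying off the curve belongs to $N$, and to dispose of $\ran \gamma$ separately.

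For the inclusion, I would take $x \in A$ with $x \notin \ran \gamma$ and pick two distinct $P_1, P_2 \in \proj_\gamma(x)$. Setting $v_i \coloneqq (x - P_i)/\pNormn{x - P_i}$, each map $y \mapsto \pNormn{y - P_i}$ is differentiable at $x$ with gradient $v_i$, and the global bound $\dist(y, \gamma) \le \pNormn{y - P_i}$ holds with equality at $y = x$. Hence any gradient of $\dist(\cdot, \gamma)$ at $x$ would have to coincide with both $v_1$ and $v_2$. But $v_1 = v_2$ together with $\pNormn{x - P_1} = \pNormn{x - P_2} = \dist(x, \gamma)$ forces $P_1 = P_2$, a contradiction. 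Consequently $\dist(\cdot, \gamma)$ is non-differentiable at $x$, so $x \in N$.

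To finish, note that $\ran \gamma$ is the $C^2$-image of a compact interval into $\BR^d$ (with $d \ge 2$), a rectifiable arc of finite length and thus of $d$-dimensional \PLebesgue\ measure zero. The decomposition
\begin{equation*}
  A \;=\; (A \setminus \ran \gamma) \,\cup\, (A \cap \ran \gamma) \;\subset\; N \cup \ran \gamma
\end{equation*}
then exhibits $A$ as a subset of a union of two null sets, proving the claim.

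The one non-routine step is the subdifferential argument establishing non-differentiability at each genuine ambiguity point; everything else is bookkeeping. I view the main obstacle not as the proof itself but as the temptation to push \thref{prop:amb-pts} further---an approach that would require a separate covering argument for $A \setminus A_2$ (for instance, noting that near any $x$ with $\#[\proj_\gamma(x)] = k \ge 2$ the set $A$ is locally contained in the finite union of the \PDini\ hypersurfaces $\{\dist(\cdot, \gamma_i) = \dist(\cdot, \gamma_j)\}$ constructed in the proof of \thref{lem:2-amb-pts}) and is considerably less compact than the \pers{Rademacher} route.
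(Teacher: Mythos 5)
Your proof is correct, and it takes a genuinely different route from the paper. The paper does not argue via differentiability of the distance function at all: it first shows (\thref{lem:2-amb-pts}) that the two-fold ambiguity set $A_2$ is locally the realization of a $C^1$ map $\BR^{d-1} \to \BR^d$ via the implicit function theorem applied to $y \mapsto \dist(y,\gamma_1) - \dist(y,\gamma_2)$, hence null by \pers{Sard}'s theorem, then shows $A = \overline{A_2}$ (\thref{prop:amb-pts}) by a geometric circle-deformation argument, and finally passes to the closure with an informal remark (``since $A_2$ is locally a hypersurface, the measure of the closure remains zero''), ultimately leaning on the citation to \pers{Hastie}--\pers{Stuetzle} for the full statement. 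You correctly identify the weak point of that chain---closures of null sets need not be null, so the last step needs an extra covering argument---and you bypass it entirely: the $1$-Lipschitz distance function is differentiable almost everywhere by \pers{Rademacher}, and your first-order argument (at an ambiguity point $x \notin \ran\gamma$ the function $y \mapsto \pNormn{y - P_i} - \dist(y,\gamma)$ attains a minimum at $x$, forcing any putative gradient to equal both $v_1$ and $v_2$, hence $P_1 = P_2$) is sound, so $A \setminus \ran\gamma$ lies in the non-differentiability set. Your approach is shorter, fully self-contained, and strictly more general---it needs no $C^2$ regularity and works for the metric projection onto any closed subset of $\BR^d$---whereas the paper's approach, at the price of the closure subtlety, yields genuinely more structure (that $A_2$ is locally a $C^1$ hypersurface and $A = \overline{A_2}$), which supports the geometric picture used elsewhere in the paper. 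One cosmetic simplification: $A \cap \ran\gamma$ is empty, since a point on the curve has itself as its unique projection, so you do not need the rectifiability of $\ran\gamma$ or the restriction $d \ge 2$ at all.
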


\section{Approximation by Polygonal Chains}
\label{sec:curves-poly-chain}

If the curvature of a planar curve $\gamma$ is
bounded by $\kappa_{\max}$, then the approximation error between an
arc $\gamma_{[PQ]}$ and the line segment $[PQ]$ is bounded by
\begin{equation*}
  \dist(\gamma_{[PQ]}, [PQ]) \le \frac{\ell^2(\gamma_{[PQ]})}{2} \,
  \kappa_{\max}, 
\end{equation*}
see \cite{BDDD04}.  Since we however have no information about the
current arc $\gamma_{[PQ]}$ living in $\BR^d$, we use a more geometric
consideration to estimate the maximal approximation error.

\begin{theorem}[Approximation Error]
  \label{thm:app-err}
  Let $\gamma \colon [0, \ell(\gamma)] \to \BR^d$ be a
  $\rho$-separated \PJordan\ $C^2$-arc, and let $P$ and $Q$ be
  distinct points on $\gamma$ such that
  $h \coloneqq \dist(P,Q) < 2 \rho$.  The maximal approximation error
  is then bounded by
  \begin{equation*}
    \dist(\gamma_{[PQ]}, [PQ]) \le \rho - \sqrt{\rho^2 -
      \tfrac{h^2}{4}}. 
  \end{equation*}
\end{theorem}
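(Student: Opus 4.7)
My plan is to reduce the bound to a 2D geometric inequality: for every point $R$ on the arc, I exhibit a closed ball of radius $\rho$ passing through $P$ and $Q$ that contains $R$, and then read the sagitta off from plane geometry.

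First, I would reduce the Hausdorff distance to the one-sided supremum $\sup_{R \in \gamma_{[PQ]}} \dist(R,[PQ])$. The reverse direction is automatic by continuity: as $\gamma(t)$ traces $\gamma_{[PQ]}$, the orthogonal projection of $\gamma(t)$ onto the line through $P,Q$ sweeps continuously from $P$ to $Q$, so each $y \in [PQ]$ has an arc preimage $R$ with $\dist(y,R)$ equal to the transverse component of $R$ and hence bounded by $\dist(R,[PQ])$.

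Fix an arbitrary $R \in \gamma_{[PQ]}$ off the line through $P$ and $Q$ (the collinear case being trivial). I would work in the $2$-plane $\Pi$ containing $P,Q,R$ and choose Cartesian coordinates in $\Pi$ with $P = (-h/2,0)$, $Q = (h/2,0)$ and $R = (x_0,y_0)$, $y_0 > 0$. The point $c \coloneqq (0,-\sqrt{\rho^2 - h^2/4}\,) \in \Pi$ is well-defined since $h < 2\rho$ and satisfies $\pNormn{c-P} = \pNormn{c-Q} = \rho$, lying on the opposite side of the chord from $R$. The key claim is then $\pNormn{R-c} \le \rho$: expanding gives $x_0^2 + (y_0 + \sqrt{\rho^2 - h^2/4}\,)^2 \le \rho^2$, and since the closed disk is convex with $P,Q$ as its extreme points in the chord direction, $x_0 \in [-h/2,h/2]$, so $\dist(R,[PQ]) = y_0 \le \rho - \sqrt{\rho^2 - h^2/4}$.

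To establish the claim I would study $f(t) \coloneqq \pNormn{\gamma(t) - c}^2$ on the parameter interval corresponding to $\gamma_{[PQ]}$, which satisfies $f(t_P) = f(t_Q) = \rho^2$. Using arclength parameterization $\pNormn{\dot\gamma} = 1$, one has $f''(t) = 2 + 2\iProd{\gamma(t)-c}{\ddot\gamma(t)}$. At an interior maximizer $t^*$, the condition $f'(t^*) = 0$ forces $\gamma(t^*) - c$ to be parallel to a unit normal $v^* \in N_\gamma(t^*)$, and $f''(t^*) \le 0$ combined with the curvature bound $\pNormn{\ddot\gamma(t^*)} \le 1/\rho$ (itself a consequence of $\rho$-separation) gives $\pNormn{\gamma(t^*) - c} \ge \rho$.

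The principal obstacle is that this analytic step yields the inequality in the \emph{wrong} direction. To close the argument, I would apply $\rho$-separation at $\gamma(t^*)$ directly in the direction $-v^*$: the open ball $\mathring B_\rho(\gamma(t^*) - \rho v^*)$ must be disjoint from $\gamma$, so $P,Q$ lie outside it; combining these constraints with $\pNormn{c-P} = \pNormn{c-Q} = \rho$ and $\pNormn{P-Q} = h < 2\rho$ should rule out $\pNormn{\gamma(t^*) - c} > \rho$ by direct plane geometry, pinning $f \le \rho^2$ throughout. Should this geometric closure prove too delicate in full generality, a fallback route is to first bound the arclength via the tangent-rotation rate, obtaining $\ell(\gamma_{[PQ]}) \le 2\rho \arcsin(h/(2\rho))$, and then identify the circular arc of radius $\rho$ as the extremal case realizing the sagitta $\rho - \sqrt{\rho^2 - h^2/4}$.
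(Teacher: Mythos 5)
Your reduction to the per-point claim $\pNormn{R-c}\le\rho$ is in substance the same containment statement the paper proves (the arc $\gamma_{[PQ]}$ lies in the intersection of all radius-$\rho$ balls whose boundary spheres pass through $P$ and $Q$), but your derivation of that claim does not close, and the place where it fails is exactly the heart of the theorem. As you concede, the second-order condition at an interior maximizer of $f(t)=\pNormn{\gamma(t)-c}^2$ only yields $\pNormn{\gamma(t^*)-c}\ge\rho$. The proposed patch via $\rho$-separation at $\gamma(t^*)$ in the direction $-v^*$ does not produce a contradiction: writing $\delta\coloneqq\pNormn{\gamma(t^*)-c}-\rho>0$ and letting $\theta_P,\theta_Q$ be the angles at $c$ between $\gamma(t^*)-c$ and $P-c$, $Q-c$, the law of cosines shows that $P,Q\notin \mathring B_\rho\bigl(\gamma(t^*)-\rho v^*\bigr)$ is equivalent to $\delta\ge 2\rho\cos\theta_P$ and $\delta\ge 2\rho\cos\theta_Q$, and these conditions are satisfiable --- for instance, whenever $\gamma(t^*)-c$ points away from the chord, both cosines are non-positive and every $\delta>0$ is admissible. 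That admissible configuration is precisely the scenario the paper must (and does) exclude globally: the arc leaving the lens-shaped region near $P$, travelling around the outside, and re-entering at $Q$. Ruling this out cannot be done from the local data at $t^*$ together with the distance relations you list; it needs a connectedness/crossing argument about the whole arc, which is what the paper supplies (the arc cannot cross the boundary of the intersection except at $P$ and $Q$, since a normal ring at any other crossing point would cover $P$ or $Q$, and it cannot circumvent the region because of the end-point part of the $\rho$-separation). Note also that $\gamma(t^*)$ need not lie in your plane $\Pi$, so ``direct plane geometry'' is not available without a further reduction.

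The fallback route is not a proof either. A curvature bound alone does not give $\ell(\gamma_{[PQ]})\le 2\rho\arcsin\bigl(\tfrac h{2\rho}\bigr)$: curves with curvature at most $\nicefrac1\rho$ joining two points at distance $h<2\rho$ can be arbitrarily long, so this length bound again presupposes the very containment (or the end-point condition) that is at issue. Moreover, even granting the length bound, what it yields directly is containment in the ellipse with foci $P,Q$ and major axis $2\rho\arcsin\bigl(\tfrac h{2\rho}\bigr)$, whose semi-minor axis behaves like $\tfrac{h^2}{4\sqrt3\,\rho}$ for small $h$, which is strictly larger than the sagitta $\rho-\sqrt{\rho^2-\tfrac{h^2}4}\approx\tfrac{h^2}{8\rho}$; so ``the circular arc of radius $\rho$ is the extremal case'' is an additional extremal claim that would itself require the kind of geometric argument the paper gives, not a consequence of the length bound.
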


\begin{figure}[t]
  \centering
  \includegraphics{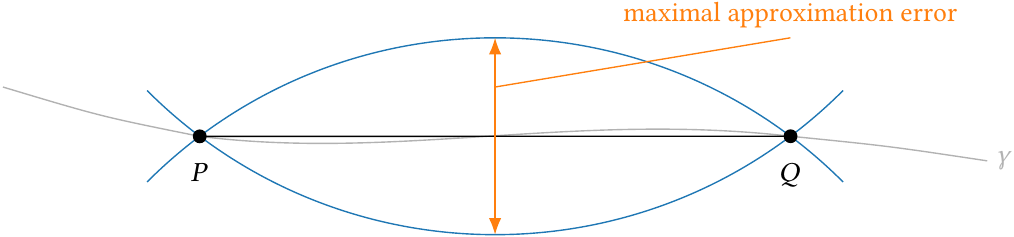}
  \caption{The two circles correspond to the {\color{Pri}worst-case
    normal rings} at $P$ that touch $Q$ and vice versa.  The
    {\color{Sec}maximal approximation error} may be attained in the
    middle of $[PQ]$.}
  \label{fig:appro-planar}
\end{figure}

\begin{proof}
  We first discuss the planar setting, where the normal ring becomes
  the union of two open discs of radius $\rho$ for inner points and is
  extended by an open half-disc of radius $2\rho$ at the end points.
  By the $\rho$-separation, the normal rings of $\gamma_{[PQ]}$ are
  not allowed to cover $P$ or $Q$.  To estimate the approximation
  error, let us consider the worst-case normal rings at $P$ that also
  touch $Q$ and vice versa, \ie\ the two discs of radius $\rho$
  touching $P$ and $Q$.  The situation is schematically shown in
  Figure~\ref{fig:appro-planar}.  The main observation is now that the
  arc $\gamma_{[PQ]}$ has to lie in the intersection of the two discs.
  For this, we notice two things:
  \begin{enumerate*}
  \item $\gamma$ cannot cross the boundary of the intersection except
    at $P$ and $Q$ since otherwise the normal ring at the crossing
    point would cover $P$ or $Q$;
  \item $\gamma$ cannot start in the intersection, leave at $P$, go
    around the intersection, and enter again at $Q$ due to the end
    point condition of the $\rho$-separability.
  \end{enumerate*}
  Since the two circular segment shown in
  Figure~\ref{fig:appro-planar} are possible paths for
  $\gamma_{[PQ]}$, the maximal \PHausdorff\ approximation error is
  attained in the middle of $[PQ]$.  The \PPythagoras[ean] theorem now
  yields the assertion.

  Similarly in higher dimensions, $\gamma_{[PQ]}$ is contained in the
  intersections of all open balls of radius $\rho$ touching $P$ and
  $Q$.  Possible arcs form $P$ to $Q$ with maximal \PHausdorff\
  approximation error are the circular segments of a two-dimensional
  cut through the intersection containing $P$ and $Q$, which looks
  exactly as in the planar setting.  The assertion again follows by
  the \PPythagoras[ean] theorem.  \qed
\end{proof}

If the end points $P$ and $Q$ of $\gamma_{[PQ]}$ are inaccurate,
the error analysis may be adapted by enlarging the area that contains
the true arc.

\begin{corollary}[Approximation Error]
  \label{cor:app-err}
  Let $\gamma \colon [0, \ell(\gamma)] \to \BR^d$ be an
  $\rho$-separated \PJordan\ $C^2$-arc, and let $\tilde P$, $\tilde Q$ with
  $\dist(\tilde P , P) \le \epsilon$,
  $\dist(\tilde Q , Q) \le \epsilon$, and
  $h \coloneqq \dist(\tilde P, \tilde Q) < 2 (\rho - \epsilon)$ be
  approximations of the curve points $P$, $Q$. The maximal
  approximation error of the arc $\gamma_{[PQ]}$ is bounded by
  \begin{equation*}
    \dist(\gamma_{[PQ]}, [\tilde P \tilde Q]) \le \rho + \epsilon -
    \sqrt{\rho^2 - 
      \tfrac{(h+2\epsilon)^2}{4}}.
  \end{equation*}
\end{corollary}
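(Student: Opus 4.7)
The plan is to reduce the inexact-endpoint case to the previously proven \thref{thm:app-err} by means of two triangle-style estimates: one on Euclidean distances between the true and perturbed endpoints, and one on the Hausdorff distance between the true chord and the perturbed chord. Since the two line segments $[PQ]$ and $[\tilde P \tilde Q]$ lie close to each other, the desired bound for $\dist(\gamma_{[PQ]}, [\tilde P\tilde Q])$ should follow by interposing the true chord $[PQ]$.

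The first step is to control the length of the true chord. By the ordinary triangle inequality applied to the four points $P, \tilde P, \tilde Q, Q$, one obtains
\begin{equation*}
  \dist(P,Q) \le \dist(P,\tilde P) + \dist(\tilde P,\tilde Q) + \dist(\tilde Q, Q) \le h + 2\epsilon,
\end{equation*}
and by assumption $h + 2\epsilon < 2\rho$. Therefore \thref{thm:app-err} can be invoked on the arc $\gamma_{[PQ]}$ with its true endpoints $P$ and $Q$, giving
\begin{equation*}
  \dist(\gamma_{[PQ]}, [PQ]) \le \rho - \sqrt{\rho^2 - \tfrac{(h+2\epsilon)^2}{4}}.
\end{equation*}

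The second step is to estimate the Hausdorff distance between the two chords. For any convex combination $(1-t) P + t Q$ on $[PQ]$ with $t \in [0,1]$, the corresponding point $(1-t)\tilde P + t \tilde Q$ on $[\tilde P \tilde Q]$ differs from it by at most $(1-t)\epsilon + t\epsilon = \epsilon$, and symmetrically in the other direction; hence $d_{\id H}([PQ], [\tilde P \tilde Q]) \le \epsilon$. Combining both estimates via the triangle inequality for the Hausdorff distance,
\begin{equation*}
  \dist(\gamma_{[PQ]}, [\tilde P \tilde Q])
  \le \dist(\gamma_{[PQ]}, [PQ]) + d_{\id H}([PQ], [\tilde P \tilde Q])
  \le \rho + \epsilon - \sqrt{\rho^2 - \tfrac{(h+2\epsilon)^2}{4}},
\end{equation*}
which is precisely the claimed bound. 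There is no real obstacle here: the result is a direct composition of \thref{thm:app-err} with two elementary perturbation inequalities, the only subtlety being to verify that the enlarged chord length $h+2\epsilon$ still satisfies the sub-$2\rho$ hypothesis of the previous theorem, which is exactly what the assumption $h < 2(\rho - \epsilon)$ guarantees.
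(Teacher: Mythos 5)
Your proof is correct, and it takes a genuinely different route from the paper's. The paper proves \thref{cor:app-err} by re-running the geometric covering construction of \thref{thm:app-err}: it extends the segment $[\tilde P \tilde Q]$ by $\epsilon$ at both ends and translates the worst-case circular segments in every two-dimensional cut by $\epsilon$ so that the enlarged region still contains the true arc $\gamma_{[PQ]}$, after which the bound is read off from the same \PPythagoras-type computation. You instead treat \thref{thm:app-err} as a black box: the triangle inequality gives $\dist(P,Q) \le h + 2\epsilon < 2\rho$, so the theorem (together with the monotonicity of $t \mapsto \rho - \sqrt{\rho^2 - \tfrac{t^2}{4}}$ on $[0,2\rho)$, a step you should state explicitly since the theorem is phrased in terms of the actual chord length $\dist(P,Q)$) yields $\dist(\gamma_{[PQ]}, [PQ]) \le \rho - \sqrt{\rho^2 - \tfrac{(h+2\epsilon)^2}{4}}$, and the convex-combination estimate $d_{\id H}([PQ],[\tilde P \tilde Q]) \le \epsilon$ combined with the triangle inequality for the \PHausdorff\ metric on compact sets finishes the argument. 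Your decomposition is more self-contained and automatically handles both directions of the \PHausdorff\ distance, whereas the paper's one-sentence geometric adjustment stays inside the normal-ring picture, which is the picture it continues to exploit in the later inexact-projection analysis (Lemmas~\ref{lem:cond-rot} and \ref{lem:dist-T}); both yield exactly the stated bound.
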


\begin{proof}
  Extend the line $[\tilde P \tilde Q]$ by a segment of length
  $\epsilon$ on both ends and move the constructed circular segments
  in every two-dimensional cut away from the approximation line by
  $\epsilon$ to cover the true end points of the arc $\gamma_{[PQ]}$.
  \qed
\end{proof}

The above estimate only considers the distance between the curve points
and the maximal curvature encoded in the well-separation.  It is
possible to improve the bounds using tangents at $P$ and $Q$
additionally.  In doing so, the main improvement can been seen for the
approximation of long arcs $\gamma_{[PQ]}$.  To control the \PHausdorff\
approximation error numerically without knowing the curve itself, the
length of the line segments has to be rather small such that the angle
between the tangents and the direction of the polygonal line segment
is negligible.

\section{Reconstruction of the Underlying \PJordan\ Curve}
\label{sec:reconstr-underly-cur}

The identification of a curve-based sleeve function consists of two
central part.  One the one side, we have the approximate the unknown
structure function $g$ and, on the other side, the underlying curve
$\gamma$.  Assume for this section that the differentiable structure
function $g \colon [0,\infty] \to \BR$ is strictly monotonically
increasing with $g(0) = 0$ and $\dot g \gg 0$ and is known in advance.
For any point $x \in \ran \gamma$, the sleeve function $f$ thus
becomes zero.  Otherwise, if $x$ is unambiguous, \thref{thm:diff-dist}
ensures that $f$ is differentiable with gradient 
\begin{align*}
  \nabla f(x)
  &= 2 \dot g\bigl( (\dist(x, \gamma))^2 \bigr) \,
    \dist(x, \gamma) \,
    \frac{x - \proj_\gamma(x)}{\pNormn{x - \proj_\gamma(x)}}
  \\[\fskip]
  &= 2 \dot g\bigl( (\dist(x, \gamma))^2 \bigr) \,
    \bigl( x - \proj_\gamma(x) \bigr);
\end{align*}
so the negative gradient points directly to $\proj_\gamma(x)$.
Moreover, the distance to the curve is encoded in the function value
$f(x)$ and may be determined by inverting the strictly monotone $g$.
Together, this allow us to compute the projection to the unknown curve
by evaluating $f$ and $\nabla f$.

\begin{algorithm}[Projection to Underlying \PJordan\ Curve]
  \label{alg:proj-cur}
  {\scshape Input:} $x \in \BR^d$, $f \colon \BR^d \to \BR$,
  $g \colon \BR^d \to \BR$.
  \begin{enumerate}[(1),nosep]
  \item Evaluate $z \coloneqq f(x)$.
  \item Compute $t \coloneqq g^{-1}(z)$.
  \item Determine $v \coloneqq \nabla f(x)$.
  \item Set $y \coloneqq x - \sqrt t \, \pNormn{v}^{-1} \, v$.
  \end{enumerate}
  {\scshape Output:} $y = \proj_\gamma(x)$.
\end{algorithm}

On the basis of this projection, we approximate the underlying curve
by a polygonal chain.  If $[PQ]$ is the last line segment, we consider
 $Q + s(PQ)^\rightarrow$, which may be seen as extension of
the segment in direction of $Q$, and project this point back to $\gamma$ by
\thref{alg:proj-cur}.  If the step size $s$ is chosen appropriately,
the distance of the new point to $Q$ can be controlled from below and
from above.

\begin{theorem}[Step Size Guarantee]
  \label{thm:step-size-guar}
  Let $\gamma \colon [0, \ell(\gamma)] \to \BR^d$ be a
  $\rho$-separated \PJordan\ $C^2$-arc, and let $P$ and $Q$ be distinct points
  on $\gamma$ such that $h \coloneqq \dist(P,Q) \le \eta$ for
  $\eta < \rho$.  Then $R \coloneqq \proj_\gamma(Q + s_* v)$ with
  $v \coloneqq (PQ)^\rightarrow / \dist(P,Q)$ and
  $s_* \coloneqq (\eta^2 + 2 \eta \rho) / (2 \rho + 2 \eta +h)$
  satisfies
  \begin{equation*}
    \tfrac 6{80} \, \eta \le \dist(R,Q) \le \eta,
    \addmathskip
  \end{equation*}
  or, if the lower bound does not hold, $R$ is an end point of
  $\gamma$.
\end{theorem}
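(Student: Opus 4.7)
The argument hinges on the geometric picture from Theorem~\ref{thm:app-err}: the $\rho$-separation forces the curve near $Q$ into a lens-shaped region bounded by arcs of circles of radius $\rho$, and in particular the forward tangent $\dot\gamma$ at $Q$ deviates from the chord direction $v$ by at most $\alpha \coloneqq \arcsin(h/(2\rho))$. I set $X \coloneqq Q + s_* v$ and $R = \gamma(t_R)$, and start from the trivial inequality $\dist(X,R) \le \dist(X,Q) = s_*$, since $Q \in \ran\gamma$. Note also that $s_* \le \eta$: indeed $s_* \le \eta(\eta + 2\rho)/(2\rho + 2\eta) \le \eta$ since $\eta + 2\rho \le 2\eta + 2\rho$.

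For the upper bound $\dist(R,Q) \le \eta$, the plan is to identify the worst case in which $R$ is far from $Q$, namely the curve continuing along an osculating circle of radius $\rho$ in the plane spanned by $v$ and a chosen normal. I would then show that the stated value of $s_*$ is precisely the threshold at which, on this extremal arc, $X$ is equidistant from $Q$ and from the candidate point $R'$ at chord-distance $\eta$ from $Q$; any larger forward displacement would put $R'$ outside $B_{s_*}(X)$, so the projection cannot leave $B_\eta(Q)$. Algebraically one expresses $\dist(X,R')^2$ in terms of $\dist(X,Q) = s_*$, $\dist(R',Q) = \eta$, and the cosine of the angle $\angle(v,(QR')^\rightarrow)$ arising from the circular geometry (which also depends on $h$ through the chord-tangent angle at $Q$); setting this equal to $s_*^2$ and solving should reproduce precisely the stated $s_* = (\eta^2 + 2\eta\rho)/(2\rho + 2\eta + h)$.

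For the lower bound $\dist(R,Q) \ge \tfrac{6}{80}\,\eta$, I would assume $R$ is an inner point of $\gamma$ (otherwise the alternative conclusion applies) and proceed by contradiction, supposing $\dist(R,Q)$ were strictly smaller. The orthogonality condition $\iProd{X-R}{\dot\gamma(t_R)} = 0$, combined with the curvature bound $\pNormn{\ddot\gamma} \le 1/\rho$, implies that $\dot\gamma(t_R)$ differs from $\dot\gamma(t_Q)$ by at most the angle $|t_R-t_Q|/\rho$, and $\dot\gamma(t_Q)$ in turn differs from $v$ by at most $\alpha$. Decomposing $X-R = s_* v - (R-Q)$ along $\dot\gamma(t_R)$ and enforcing orthogonality leads, after bounding $|\iProd{R-Q}{\dot\gamma(t_R)}| \le |t_R - t_Q|$ via the arclength parameterization, to the key inequality $|t_R - t_Q| \ge s_* \cos(\alpha + |t_R-t_Q|/\rho)$. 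Solving this fixed-point-like inequality in the regime $\eta < \rho$ and substituting $s_*$ yields the explicit constant $\tfrac{6}{80}$.

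The main obstacle is the quantitative bookkeeping for the lower bound: the three angular deviations---the chord-tangent angle at $Q$, the tangent drift between $Q$ and $R$, and the conversion between arclength and Euclidean displacement---must be combined carefully into a single scalar inequality whose root matches the concrete numeric constant in the statement. The value of $s_*$ appears to be engineered precisely so that this algebra closes; verifying that alignment, and separately dispatching the endpoint alternative (where $R = \gamma(0)$ or $R = \gamma(\ell(\gamma))$ so that the derivative arguments via Theorems~\ref{thm:diff-proj} and~\ref{thm:diff-dist} have to be replaced by the one-sided endpoint analysis), are the expected technical chores.
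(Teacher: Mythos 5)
Your upper bound argument has a genuine gap, and it is located exactly where the paper's proof does its real work. From $Q\in\ran\gamma$ you only get $\dist(X,R)\le \dist(X,Q)=s_*$ with $X\coloneqq Q+s_*v$, and since $s_*\ge \nicefrac\eta2$ this alone yields at best $\dist(R,Q)\le 2s_*\ge\eta$, which is not enough. To rescue this you claim that every curve point at chord distance greater than $\eta$ from $Q$ lies outside $B_{s_*}(X)$, with the worst case being a circle of radius $\rho$; but that containment claim is false: nothing prevents $\gamma$ from continuing essentially straight through $X$, in which case points of $\gamma$ at chord distance $>\eta$ from $Q$ lie at distance about $\eta-s_*\le s_*$ from $X$, i.e.\ well inside $B_{s_*}(X)$ (the theorem is still true there because the projection is then near $X$, but your sufficient condition fails). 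Moreover, your proposed characterization of $s_*$ (that $X$ be equidistant from $Q$ and from the point $R'$ at chord distance $\eta$ on the extremal $\rho$-circle through $P$ and $Q$) does not reproduce the stated step size: with $Q$ at the origin that condition gives $s=\eta/(2\cos\phi)$, e.g.\ $s=\nicefrac27$ for $\rho=1$, $h=\eta=\nicefrac12$, whereas $s_*=(\eta^2+2\eta\rho)/(2\rho+2\eta+h)=\nicefrac5{14}$. The paper's route is different: it sharpens the bound on $\dist(X,R)$ itself, showing via the $\rho$-separation (balls of radius $\rho$ through $P$ and $Q$) that, unless $\gamma$ ends near $Q$, the projection distance is at most $\delta_{s}=\sqrt{\rho^2+hs+s^2}-\rho$, and then both bounds follow from the triangle inequality $s-\delta_s\le\dist(R,Q)\le s+\delta_s$; the value $s_*$ is defined precisely by $s_*+\delta_{s_*}=\eta$.

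Your lower-bound sketch is a genuinely different idea (first-order optimality $\iProd{X-R}{\dot\gamma(t_R)}=0$ at an inner projection point, plus the curvature bound and the chord--tangent angle $\alpha=\arcsin(h/(2\rho))$), and a careful execution of it could plausibly give a constant even better than $\tfrac6{80}$, since $s_*\ge \tfrac25\eta$ and the relevant cosine stays bounded away from zero once $|t_R-t_Q|$ is controlled. But as written it is only a sketch: you do not verify that the bookkeeping produces $\tfrac6{80}$, you need a chord--arc estimate (small $\dist(R,Q)$ implies small $|t_R-t_Q|$ for a $\rho$-separated curve) to keep the tangent-drift angle small, and the argument silently uses the upper bound (or some a priori localization of $R$). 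In the paper the lower bound is instead the purely metric quantity $s_*-\delta_{s_*}$, estimated by monotonicity in $h$ and the mean value theorem; no differentiability of the projection is needed, which is also why the endpoint alternative enters only through the failure of the $\delta_s$-bound, not through one-sided derivative considerations.
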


\begin{figure}[t]
  \centering
  \includegraphics{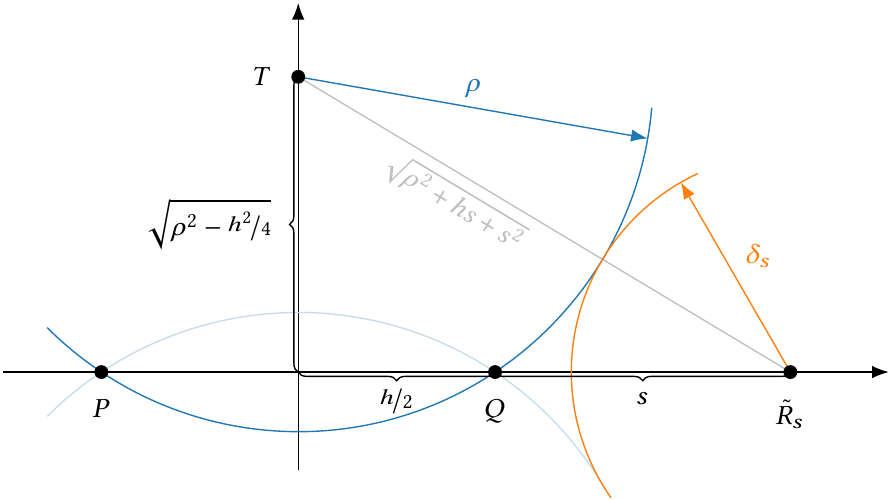}
  \caption{Calculation of the {\color{Sec}maximal projection distance}
    of the point $\tilde R_s$ onto $\gamma$ based on the
    {\color{Pri}worst-case normal rings}.}
  \label{fig:min-step}
\end{figure}
\begin{proof}
  Due to the restriction $\eta < \rho$, the parameter $s_*$ is bounded
  by $\nicefrac34 \, \rho$; so the distance between the point
  $\tilde R \coloneqq Q + s_* v$ and $\gamma$ is $\rho$ at the most,
  which ensures that the projection onto $\gamma$ is unique, and that
  $R$ is hence well defined.  Next, we study the maximal possible
  projection distance $\delta_s \coloneqq \dist(R_s, \tilde R_s)$ with
  $R_s \coloneqq \proj_\gamma(\tilde R_s)$ and
  $\tilde R_s \coloneqq Q + s v$ for arbitrary step sizes $s$.
  Because of the $\rho$-separation, the normal ring around $Q$ is not
  allowed to cover $P$; so the path of $\gamma$ after $Q$ is limited
  by considering all possible normal rings.  In two dimensions, the
  path is bounded by two discs, see Figure~\ref{fig:min-step}, in
  higher dimensions, by the union of all balls touching $P$ and $Q$.
  If $\gamma$ does not end near $Q$, the maximal possible projection
  distance is reached if $B_{\delta_s}(\tilde R_s)$ touches the union.
  The other way round, if $\delta_s$ becomes greater, then $\gamma$
  cannot leave the neighbourhood of $Q$ and $R_s$ is an end point.

  If $\gamma$ does not end, the maximal possible projection distance
  can be explicitely computed by considering a two-dimensional cut
  through the rotational invariant geometry as shown in
  Figure~\ref{fig:min-step} and is given due to the \PPythagoras[ean]
  theorem by
  \begin{equation*}
    \delta_s = \sqrt{\rho^2 + hs + s^2} - \rho.
  \end{equation*}
  For a fixed $s$, the
  distance between $R_s$ and $Q$ is thus bounded by
  \begin{equation*}
    s - \delta_s \le  \dist(R_s,Q) \le s + \delta_s.
  \end{equation*}
  The step size $s_*$ in the assertion is the unique solution of
  \begin{equation*}
    \sqrt{\rho^2 + hs + s^2} = \eta + \rho - s,
  \end{equation*}
  which ensures that the upper bound becomes exactly $\eta$.

  Inserting $s_*$ into the lower bound, we obtain
  \begin{equation*}
    s_* - \delta_{s_*}
    =
    \frac{\eta^2 + 2\eta\rho}{2\rho + 2\eta + h}
    - \sqrt{
      \rho^2 + h \, \frac{\eta^2 + 2 \eta \rho}{2 \rho + 2 \eta + h}
      + \Bigl( \frac{\eta^2 +2 \eta \rho}{2\rho + 2\eta + h} \Bigr)^2
    }
    + \rho.
  \end{equation*}
  Note that the lower bound is monotonically decreasing with respect
  to $h$ because the path of $\gamma$ is less restricted for larger
  $h$ resulting in an increasing maximal projection distance
  $\delta_s$; so the lower bound is limited by 
  \begin{equation*}
    s_* - \delta_{s_*}
    \ge
    \frac{\eta^2 + 2\eta\rho}{2\rho + 3\eta}
    - \sqrt{
      \rho^2 + \frac{\eta^3 + 2 \eta^2 \rho}{2 \rho + 3 \eta}
      + \Bigl( \frac{\eta^2 +2 \eta \rho}{2\rho + 3\eta} \Bigr)^2
    }
    + \rho > 0.
  \end{equation*}
  Rearranging the argument of the square root, we have
  \begin{equation*}
    t_1 \coloneqq \rho^2 + \frac{\eta^3 + 2 \eta^2 \rho}{2 \rho + 3 \eta}
    + \Bigl( \frac{\eta^2 +2 \eta \rho}{2\rho + 3\eta} \Bigr)^2
    =
    \frac{4 \rho^4 + 17 \eta^2 \rho^2 + 12 \eta \rho^3 + 12 \eta^3
      \rho + 4 \eta^4}{(2 \rho + 3 \eta)^2}.
  \end{equation*}
  Considering the square of the remaining term yield
  \begin{equation*}
    t_2
    \coloneqq
      \Bigl( \frac{\eta^2 + 2 \eta \rho}{2 \rho + 3 \eta} + \rho
      \Bigr)^2
      = \frac{(\eta^2 + 5 \eta \rho + 2 \rho^2)^2}{(2\rho + 3 \eta)^2}
    = \frac{4 \rho^4 + 29 \eta^2 \rho^2 + 20 \eta \rho^3 + 10 \eta^3
      \rho + \eta^4}{(2\rho + 3 \eta)^2}.
  \end{equation*}
  Applying the mean value theorem to the square root, and exploiting
  $t_2 > t_1$ and $\rho > \eta$, we finally obtain
  \begin{align*}
    s_* -  \delta_{s_*}
    &=\sqrt{t_2} - \sqrt{t_1}
      \ge \frac{t_2 - t_1}{2 \sqrt{t_2}}
    \\[\fskip]
    &\ge \frac{12 \eta^2 \rho^2 - 3 \eta^4 + 8 \eta \rho^3 - 2\eta^3
      \rho}{(2\rho + 3 \eta)^2} \,
      \frac{2 \rho + 3 \eta}{2(\eta^2 + 5 \eta \rho + 2 \rho^2)}
    \\[\fskip]
    &\ge \frac{9 \eta \rho^2 + 6 \rho^3}{2 \cdot (5 \rho) \cdot (8
      \rho^2)} \, \eta
      \ge \frac{6}{80} \, \eta.
      \tag*{\qed}
  \end{align*}
\end{proof}

The calculated optimal step size $s_*$ depends on
$h \coloneqq \dist(P,Q)$.  Figuratively, if the last step has been
small, the region where $\gamma$ runs and the related uncertainty
becomes smaller resulting in a greater step size and vice versa.
The guaranteed minimal and maximal step size allow us to move along
the unknown curve without getting stuck.  If the underlying curve is
well separated, we can, moreover, control the approximation error for
the obtained polygonal chain.

\begin{algorithm}[Underlying \PJordan\ Curve Approximation]
  \label{alg:curve-app}
  {\scshape Input:} $\rho > 0$, $E \in (0,\rho)$, $x_0 \in \BR^d$ with
  $\nabla f(x_0)$ is well defined.
  \\
  {\scshape Initialization:}
  \begin{enumerate}[(1),nosep]
  \item Set $\eta \coloneqq \min\{ \rho, 2 \sqrt{ \rho^2 - (\rho -
      E)^2} \}$.
  \item Compute $P_0 \coloneqq \proj_\gamma(x_0)$. 
  \item For $n = 1,2, \dots$ until $P_0 \ne P_1$ do:
    \begin{enumerate}[(a),nosep]
    \item Generate $v_n \in \BR^d$ with $\pNormn{v_n} = \nicefrac \eta
      2$ and $v_n \perp \Span\{ v_1, \dots, v_{n-1} \}$.
    \item Compute $P_1 \coloneqq \proj_\gamma (P_0 + v_n)$.  If
      $P_1 = P_0$, try $P_1 \coloneqq \proj_\gamma (P_0 - v_n)$.
    \end{enumerate}
  \end{enumerate}
  {\scshape Iterations:}
  \begin{enumerate}[resume*]
  \item For $n = 1,2,\dots$ until $P_n = P_{n-1}$ do:
    \begin{enumerate}[(a),nosep]
    \item Set $s \coloneqq (\eta^2 + 2 \eta \rho) / (2\rho +
        2\eta + h)$ with $h \coloneqq \dist(P_n, P_{n-1})$.
    \item Compute $P_{n+1} \coloneqq \proj_\gamma(P_n + s 
      v)$ with $v \coloneqq \overrightarrow{P_{n-1}P_n} /
      \dist(P_{n-1}, P_n)$.
    \end{enumerate}
  \item For $m = 0,1,\dots$ until $P_{-m} = P_{-m+1}$ do:
    \begin{enumerate}[(a),nosep]
    \item Set $s \coloneqq (\eta^2 + 2 \eta \rho) / (2\rho +
        2\eta + h)$ with $h \coloneqq \dist(P_{-m}, P_{-m+1})$.
    \item Compute $P_{-m-1} \coloneqq \proj_\gamma(P_{-m} + s v)$ with
      $v \coloneqq \overrightarrow{P_{-m+1}P_{-m}}) / \dist(P_{-m+1},
      P_{-m})$.
    \end{enumerate}
  \end{enumerate}
  {\scshape Output:} Polygonal chain $\tilde \gamma \coloneqq
  \bigcup_{k=-m+1}^{n-2} [P_{k} P_{k+1}]$ with $\dist(\tilde
  \gamma, \gamma) \le E$.
\end{algorithm}

This algorithm starts somewhere on the underlying curve and
iteratively moves in both direction until the end point of the curve
is reached.  Due to the guaranteed length of the appended line
segments, the procedure ends after finitely many steps.

\begin{theorem}[Termination]
  \label{thm:termi}
  Let $\gamma \colon [0, \ell(\gamma)] \to \BR^d$ be a
  $\rho$-separated \PJordan\ $C^2$-arc.  For any $E \in (0,\rho)$,
  \thref{alg:curve-app} yields a polygonal chain $\tilde \gamma$ with
  $\dist(\tilde \gamma, \gamma) \le E$ in finite time, \ie\
  \thref{alg:curve-app} terminates.
\end{theorem}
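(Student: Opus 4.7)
My plan is to split the argument into two independent parts — termination of the loops, then the error estimate — and treat the initialization, the main forward and backward sweeps, and the \PHausdorff\ bound in that order.

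For the initialization, I would observe that for any $v$ with $\pNormn{v} = \eta/2 < \rho$ the identity $\proj_\gamma(P_0 + v) = P_0$ forces $v \in N_\gamma(t_0)$ via \thref{thm:single-proj} together with the normal-cone characterization used in its proof. If $P_0 = \gamma(t_0)$ is an inner point then $N_\gamma(t_0)$ is the hyperplane $\dot\gamma(t_0)^\perp$, so $d$ pairwise orthogonal candidates cannot all lie inside it; if $P_0$ is an end point then $N_\gamma(t_0)$ is a half-space and at least one of $\pm v_n$ escapes it whenever $v_n$ is not perpendicular to the boundary direction. Either way, a valid $P_1 \neq P_0$ appears within at most $d$ trials.

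For the forward iteration I plan to invoke \thref{thm:step-size-guar} at every step: with $\eta < \rho$ and $h = \dist(P_{n-1},P_n) \leq \eta$, each new point $P_{n+1}$ either satisfies $\dist(P_n,P_{n+1}) \geq \tfrac{6}{80}\eta$ or coincides with an end point of $\gamma$, in which case the stopping condition $P_{n+1} = P_n$ triggers at the subsequent iterate. The geometric picture behind that theorem — the worst-case normal-ring lens confining the reachable portion of $\gamma$ past $P_n$ — additionally shows that $P_{n+1}$ lies strictly beyond $P_n$ in arclength, so the arclength parameters of $P_0,P_1,\dots$ form a strictly increasing sequence with gaps of at least $\tfrac{6}{80}\eta$; the total arclength $\ell(\gamma)$ then bounds the number of forward iterations by $\lceil 80\ell(\gamma)/(6\eta)\rceil$. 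The backward sweep is handled symmetrically by reversing orientation.

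For the \PHausdorff\ bound I would apply \thref{thm:app-err} segmentwise: the step-size guarantee gives $\dist(P_k,P_{k+1}) \leq \eta \leq 2\sqrt{\rho^2 - (\rho - E)^2}$, hence
\[
\dist(\gamma_{[P_k P_{k+1}]}, [P_k P_{k+1}]) \leq \rho - \sqrt{\rho^2 - \tfrac{\eta^2}{4}} \leq \rho - (\rho - E) = E,
\]
with the alternative case $\eta = \rho$ covered automatically, since $2\sqrt{\rho^2 - (\rho - E)^2} \geq \rho$ already forces $E \geq \rho(1 - \tfrac{\sqrt{3}}{2})$. Taking the union over segments delivers $\dist(\tilde\gamma, \gamma) \leq E$. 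The main obstacle I anticipate is the monotonicity claim for arclength, as nothing in the bare statement of \thref{thm:step-size-guar} directly rules out $P_{n+1}$ landing back on $\gamma_{[P_{n-1}P_n]}$; settling it requires reopening the worst-case lens construction of that proof and checking that the reachable arc past $P_n$ is disjoint from the tube containing the already-approximated portion, which is where the $\rho$-separation hypothesis genuinely does its work.
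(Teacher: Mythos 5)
Your proposal is correct and follows essentially the same route as the paper's proof: the choice of $\eta$ together with \thref{thm:single-proj} makes every projection in the algorithm well defined and single-valued, \thref{thm:step-size-guar} supplies the upper bound $\dist(P_k,P_{k+1})\le\eta$ (hence the segmentwise bound $E$ via \thref{thm:app-err}) and the lower bound $\tfrac{6}{80}\,\eta$, and the finite length of $\gamma$ forces termination once an end point is reached. The two points you elaborate beyond the paper---the at-most-$d$-trial initialization and the monotone progress in arclength, which you flag and defer to the worst-case lens construction of \thref{thm:step-size-guar}---are precisely the steps the paper's own proof leaves implicit, so your treatment is, if anything, slightly more explicit.
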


\begin{proof}
  The chosen step size
  $\eta \coloneqq \min\{ \rho, 2 (\rho^2 - (\rho - E)^2)^{\nicefrac12}
  \}$ ensures that all projections during the algorithm are well
  defined and single-valued by \thref{thm:single-proj}; so
  \thref{alg:curve-app} can be executed.  The maximal step size
  guarantee in \thref{thm:step-size-guar} further yields
  $\dist(P_k, P_{k+1}) \le \eta$ resulting in
  $\dist(\gamma_{[P_k P_{k+1}]}, [P_k P_{k+1}]) \le E$ for all
  line segments by \thref{thm:app-err}.  Since the length of the line
  segments $[P_k P_{k+1}]$ is bounded from below by
  \thref{thm:step-size-guar}, and since $\gamma$ has a finite length,
  the iterations (4) and (5) terminate as soon as the end point is
  reached, which happens in finitely many steps.  \qed
\end{proof}

If the required projections become inexact, in the worst case, we are
maybe not able to recover the underlying \PJordan\ curve.  For this reason, we
now study the caused errors and instabilities in more detail.  The
central idea is to adapt the calculations in the proof of
\thref{thm:step-size-guar}, where the maximal possible projection
distance of the point $\tilde R_s \coloneqq Q + s v$ with
$v \coloneqq (PQ)^\rightarrow / \dist(P,Q)$ has been computed.  If the
projections are inexact, then we can merely determine the points $P$
and $Q$ in Figure~\ref{fig:min-step} up to a small neighbourhood---say
up to an $\epsilon$-ball; so instead of using $\rho$-balls touching
$P$ and $Q$ to guarantee the minimal and maximal step size, we
consider $\rho$-balls intersecting $B_\epsilon(P)$ and
$B_\epsilon(Q)$.  To simplify the calculations, we restrict ourselves
to a specific set of balls that results form the following
construction, see Figure~\ref{fig:min-step:rot}:

\begin{figure}[t]
  \centering
  \includegraphics{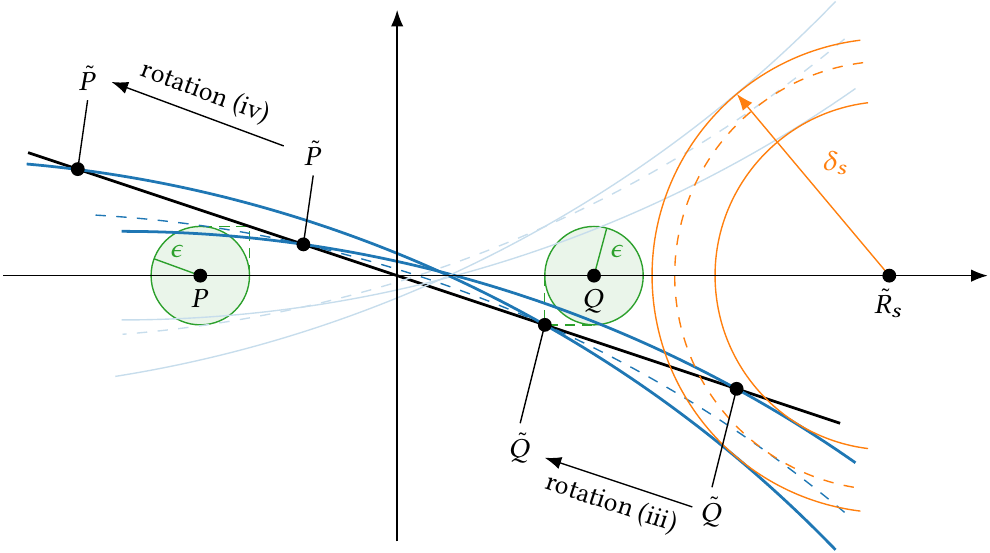}
  \caption{Rotating a {\color{Pri}specific ball} of the normal ring
    intersecting the {\color{Ter}$\epsilon$-balls} around $P$ and $Q$
    to standardize the local scenery.  Both rotations (iii) and (iv)
    enlarge the estimation of the {\color{Sec}maximal projection
      distance} for $\tilde R_s$.}
  \label{fig:min-step:rot}
\end{figure}

\begin{enumerate}[(i)]
\item take some $\rho$-ball intersecting $B_\epsilon(P)$ and
  $B_\epsilon(Q)$, and consider the plane through $P$, $Q$, and the
  centre of the ball;
\item denoting the intersection points of the $\rho$-ball with the line
  through $P + \epsilon \Vek 1$ and $Q - \epsilon \Vek 1$ by $\tilde
  P$ and $\tilde Q$;
\item rotate the ball around $\tilde P$ such that intersection $\tilde
  Q$ becomes $Q - \epsilon \Vek 1$, which enlarges the maximal
  projection distance; and
\item\label{it:fin-rot} rotate the ball around the new $\tilde Q$ and
  move $\tilde P$ such that $\dist(\tilde P, \tilde Q)$ becomes
  $h + 2 \epsilon$ with $h \coloneqq \dist(P,Q)$.
\end{enumerate}

Without loss of generality, we have assumed that the centre is located
below $P$ and $Q$ within the two-dimensional cut.  Analogous rotations
can be applied if the centre lies above $P$ and $Q$.  The rotation
\ref{it:fin-rot} only enlarges the maximal projection distance
corresponding to the original $\rho$-ball if $\rho$ is large compared
to $h$.

\begin{figure}[tp]
  \centering
  \includegraphics{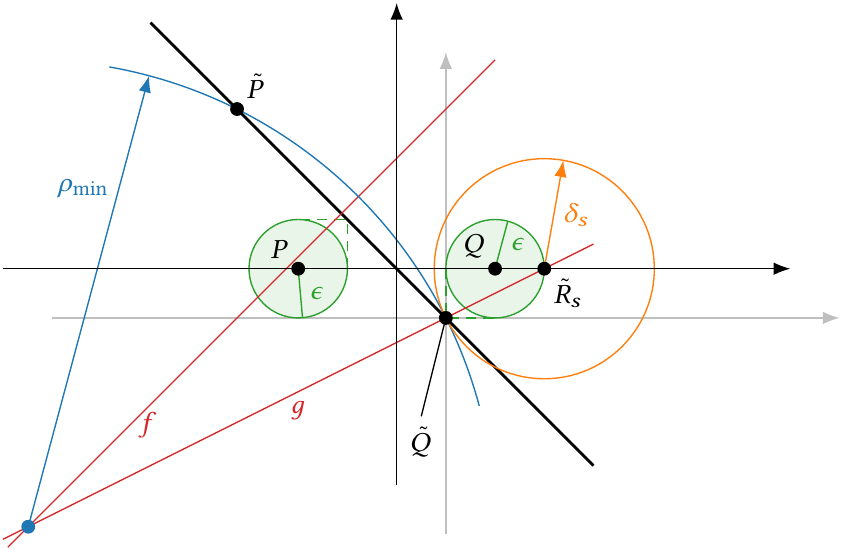}
  \caption{Worst-case scenario, where the ball with the enlarged
    {\color{Sec}maximal projection distance} around $\tilde R_s$ would
    touch the {\color{Pri}rotated ball} in $\tilde Q$.  Note that
    $\epsilon = \nicefrac h4$, $s=\epsilon$, and $\dist(\tilde P,
    \tilde Q) = h + 2 \epsilon$.  To compute the corresponding minimal
    radius $\rho_{\min}$ the perpendicular bisector $f$ of $\tilde P$
    and $\tilde Q$ and the line $g$ through $\tilde Q$ and $\tilde
    R_s$ is used.}
  \label{fig:min-step:cond}
\end{figure}

\begin{lemma}
  \label{lem:cond-rot}
  If $\sqrt {10} \, (h + 2 \epsilon) / 2 \le \rho$, $s \ge \epsilon$, and
  $\epsilon \le \nicefrac h4$ with $h \coloneqq \dist(P,Q)$, then the
  rotation~{\upshape\ref{it:fin-rot}} enlarges the maximal projection
  distance with respect to $\tilde R_s$.
\end{lemma}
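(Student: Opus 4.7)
The proof is a direct geometric computation carried out in the two\hyp{}dimensional cross\hyp{}section through $P$, $Q$, and the ball centre. I would proceed in two stages.

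The first stage establishes monotonicity of the maximal projection distance $\delta_s$ in the chord length. Adopting coordinates with $Q = (0,0)$, $P = (-h,0)$, and the ball centre in the lower half\hyp{}plane, so that $\tilde R_s = (s,0)$ and $\tilde Q = (-\epsilon,0)$, I would parameterise the family of $\rho$\hyp{}balls with $\tilde Q$ on their boundary by the chord length $\ell \coloneqq \dist(\tilde P, \tilde Q)$ along the $x$\hyp{}axis. The corresponding centre then lies at $(-\epsilon - \ell/2, -\sqrt{\rho^2 - \ell^2/4})$, and a short Pythagorean computation yields
\begin{equation*}
  d^{2}(\ell)
  = \bigl(s + \epsilon + \tfrac{\ell}{2}\bigr)^{2} + \rho^{2} - \tfrac{\ell^{2}}{4}
  = (s+\epsilon)^{2} + (s+\epsilon)\,\ell + \rho^{2},
\end{equation*}
where $d(\ell)$ denotes the Euclidean distance from $\tilde R_s$ to the centre. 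Because the coefficient $s + \epsilon$ is strictly positive (guaranteed by $s \ge \epsilon > 0$), both $d(\ell)$ and the maximal projection distance $\delta_s(\ell) = d(\ell) - \rho$ are strictly increasing in $\ell$. Hence rotation~(iv), which sets the chord length to $h + 2\epsilon$, enlarges $\delta_s$ precisely when the pre\hyp{}rotation chord length does not exceed $h + 2\epsilon$.

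The second stage identifies the threshold $\rho_{\min}$ from the extremal configuration. At the boundary of the parameter regime $\epsilon \le h/4$, $s \ge \epsilon$, namely $\epsilon = h/4$ and $s = \epsilon$, the worst case occurs when the ball $B_{\delta_s}(\tilde R_s)$ is tangent to the rotated ball at $\tilde Q$, as sketched in Figure~\ref{fig:min-step:cond}. Using the perpendicular bisector $f$ of $\tilde P \tilde Q$ and the line $g$ through $\tilde Q$ and $\tilde R_s$, one obtains a right triangle whose legs are expressible in terms of $s$, $\epsilon$, $h$, and the unknown $\rho$, and whose Pythagorean relation encodes the tangency condition. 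Solving this relation in the extremal parameters produces $\rho_{\min}^{2} = \tfrac{10}{4}(h+2\epsilon)^{2}$, i.e.\ $\rho_{\min} = \sqrt{10}\,(h+2\epsilon)/2$, matching the hypothesis. Combining this threshold with the monotonicity of the first stage then yields the lemma.

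I expect the second stage to be the main obstacle. The first stage is essentially an algebraic monotonicity check that follows once the coordinates are fixed. The delicate point is the second stage: translating the geometric tangency condition into the precise Pythagorean relation, verifying that the boundary case $\epsilon = h/4$, $s = \epsilon$ is indeed the worst one, and tracking how the pre\hyp{}rotation chord length depends on the original ball together with the rotation angle fixed by step~(iii). Once $\rho_{\min}$ is correctly identified, the conclusion follows by the monotonicity argument.
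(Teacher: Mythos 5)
There is a genuine gap, and it starts with your coordinate model. Your stage 1 places $\tilde Q$ at $(-\epsilon,0)$, i.e.\ on the line through $P$ and $Q$, with the chord $\tilde P\tilde Q$ sliding along that axis. But in the construction $\tilde Q$ is the point $Q-\epsilon\Vek 1$, offset from $[PQ]$ also in the transversal direction; in the two-dimensional cut the line through $\tilde Q$ and $\tilde R_s$ then has slope $\tfrac12$ in the extremal case $s=\epsilon$, $\epsilon=\tfrac h4$, and this transversal offset is the only reason rotation~(iv) can ever fail to enlarge the maximal projection distance. In your flattened model the quantity $d^2(\ell)=(s+\epsilon)^2+(s+\epsilon)\ell+\rho^2$ is increasing in $\ell$ with no condition on $\rho$ at all, so the hypothesis $\sqrt{10}\,(h+2\epsilon)/2\le\rho$ never enters---a sign the model misses the obstruction the lemma is about. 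Your stage 2 is in fact inconsistent with it: with $\tilde Q$ on the axis, tangency of $B_{\delta_s}(\tilde R_s)$ to the ball at $\tilde Q$ forces the centre onto the horizontal line through $\tilde Q$ and $\tilde R_s$, and intersecting with the perpendicular bisector of $\tilde P\tilde Q$ would give $\rho_{\min}=(h+2\epsilon)/2$, not $\sqrt{10}\,(h+2\epsilon)/2$; the factor $\sqrt{10}/2$ arises precisely because the bisector has slope $1$ (from $\epsilon=\tfrac h4$) and the line through $\tilde Q$ and $\tilde R_s$ has slope $\tfrac12$ (from the diagonal offset together with $s=\epsilon$). Moreover, your reduction ``rotation~(iv) enlarges $\delta_s$ iff the pre-rotation chord is at most $h+2\epsilon$'' does not describe step~(iv), which pivots the ball about $\tilde Q$ and changes the chord direction rather than sliding $\tilde P$ along a fixed axis, and you never verify that the pre-rotation chord is bounded by $h+2\epsilon$.

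The remaining content---which is essentially the whole of the paper's proof---is exactly what you defer: showing that the touching point of the maximal-projection disc stays on the far side of $\tilde Q$ throughout the rotation, that the worst case is tangency exactly at $\tilde Q$ with $s=\epsilon$ and $\epsilon=\tfrac h4$, and that in this configuration the centre lies at the intersection of the bisector $f(t)=t+\tfrac{\sqrt2}2(h+2\epsilon)$ and the line $g(t)=\tfrac t2$ (with $\tilde Q$ as origin), which yields $\rho=\sqrt{10}\,(h+2\epsilon)/2$ and hence the stated condition. You name these steps and quote the correct threshold, but the verifications (the tangency computation, the claim that the boundary case is the worst one, and the monotone behaviour of the touching point for larger $\rho$, $s$ and smaller $\epsilon$) are exactly the parts left open, so the proposal as written does not prove the lemma.
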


\begin{proof}
  Figuratively, the maximal projection distance is enlarges as long as
  the circle of the maximal possible projection distance around
  $\tilde R_s$ touches the rotated $\rho$-ball on the right-hand side
  of $\tilde Q$.  More precisely, we consider the worst-case scenario
  where the circle of the maximal projection distance touches the
  $\rho$-ball in $\tilde Q$ for $s = \epsilon$ and
  $\epsilon = \nicefrac h 4$, see Figure~\ref{fig:min-step:cond}.
  After the rotation, the centre of the $\rho$-ball has to lie at the
  intersection of the lines $f$ and $g$.  Taking $\tilde Q$ as origin,
  we may describe the lines by the functions
  \begin{equation*}
    f(t) \coloneqq t + \tfrac{\sqrt 2}2 \, ( h + 2 \epsilon)
    \qquad\text{and}\qquad
    g(t) \coloneqq \tfrac t 2.
  \end{equation*}
  A brief computation now yields
  \begin{equation*}
    \rho
    = \pNormb{\bigl( \sqrt2\,( h + 2 \epsilon),
      \sqrt2 \,( h + 2 \epsilon) / 2 \bigr)^\T}
    = \tfrac{\sqrt 10}2 \, (h + 2 \epsilon).
  \end{equation*}
  For greater $\rho$, $s$ and smaller $\epsilon$, the disc of possible
  projection of $R_s$ touches the $\rho$-ball always on right-hand
  side of $\tilde Q$ during the rotation; so the maximal projection
  radius is enlarged.  \qed
\end{proof}

\begin{figure}[tp]
  \centering
  \includegraphics{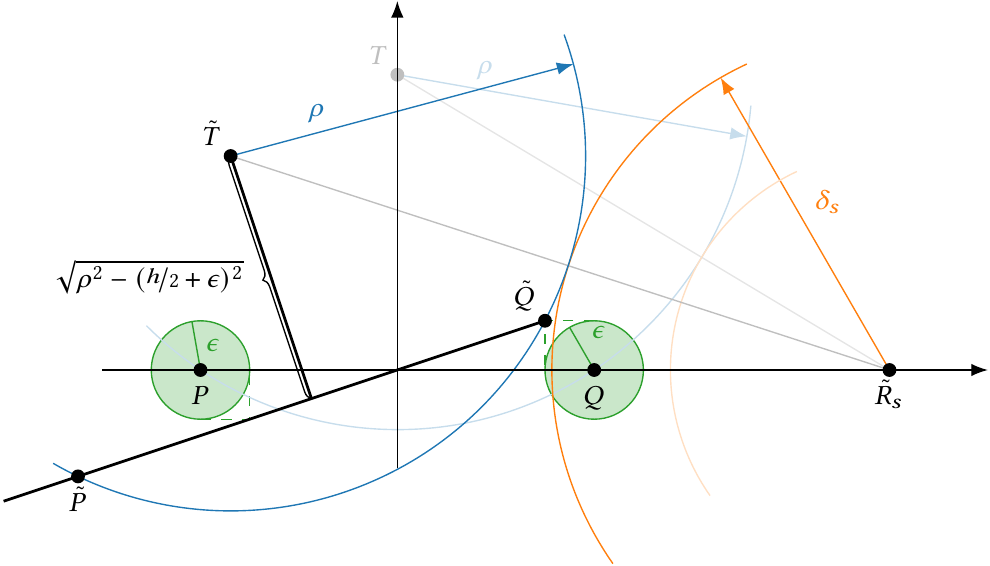}
  \caption{General situation after rotating a
    {\color{Pri}$\rho$-ball}, where the centre $\tilde T$ is located
    at the perpendicular bisector of $\tilde P$ and $\tilde Q$.  For
    comparison with Figure~\ref{fig:min-step}, which is indicated, we
    consider the mirrored ball in Figure~\ref{fig:min-step:rot}.  In
    order to estimate $\dist(Q, \proj_\gamma(\tilde R_s))$ from below,
    the radius $\rho$ has to be much greater than in the sketch.
  }
  \label{fig:min-step:err}
\end{figure}

After rotation a single $\rho$-ball, the scenery becomes like in
Figure~\ref{fig:min-step:err}.  Considering the union of all rotated
balls, which is rotationally symmetric around the line through $P$ and
$Q$, we see that every two-dimensional cut has this geometry.  Instead
of deriving an analysis similar to Figure~\ref{fig:min-step}, we
estimate how far $\tilde T$ in Figure~\ref{fig:min-step:err} is away
from $T$ in Figure~\ref{fig:min-step} and use the previous
results.

\begin{figure}[t]
  \centering
  \includegraphics{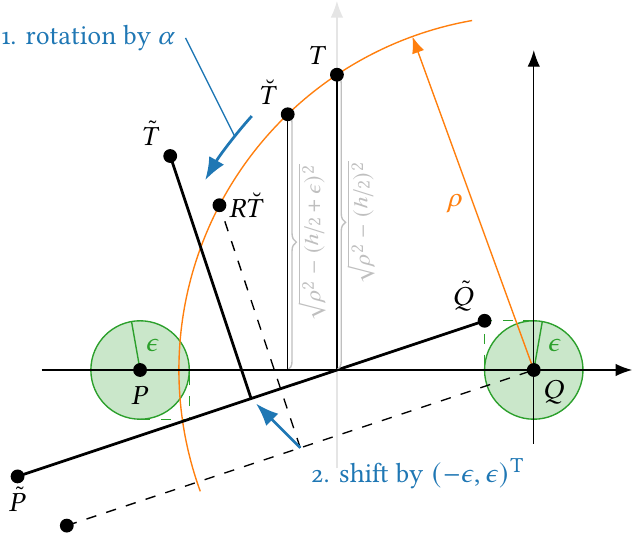}
  \caption{Calculating $\dist(\tilde T, T)$ by noticing that $\tilde T$
    results from $\breve T$ by rotation and translation.  The rotation
    angle $\alpha$ is given by
    $\tan \alpha = \epsilon / (\nicefrac h2 - \epsilon)$.}
  \label{fig:min-step:rad}
\end{figure}

\begin{lemma}
  \label{lem:dist-T}
  Let $T$ be as in Figure~\ref{fig:min-step} and $\tilde T$ as in
  Figure~\ref{fig:min-step:err}.  If
  $\sqrt{10} \, (h + 2\epsilon)/2 \le \rho$ and
  $\epsilon \le \nicefrac h 4$ with $h \coloneqq \dist(P,Q)$, then
  \begin{equation*}
    \dist(\tilde T , T)
    \le \bigl( \sqrt{32} \, \tfrac\rho h + 2\sqrt{2}
    \bigr) \, \epsilon.
  \end{equation*}
\end{lemma}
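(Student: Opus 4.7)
The plan is to follow the geometric decomposition hinted at in Figure~\ref{fig:min-step:rad}: introduce an intermediate centre $\breve T$ such that $\tilde T$ is obtained from $\breve T$ by a rotation through angle $\alpha$ about one of the contact points $\tilde P$ or $\tilde Q$, possibly combined with a translation. The point $\breve T$ plays the role of the original centre $T$ after adjusting for the fact that the chord endpoints have moved from $P,Q$ (at distance $h$) to $\tilde P,\tilde Q$ (at distance $h+2\epsilon$), but before the tilting rotation. Once this is set up, the triangle inequality $\dist(T,\tilde T) \le \dist(T,\breve T) + \dist(\breve T,\tilde T)$ splits the estimate into a translation piece and a rotation piece, which can be bounded separately.

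For the translation $T \to \breve T$, both points sit on the perpendicular bisector of their respective chords at distance $\rho$ from the endpoints. The displacement therefore has a parallel component of length $\epsilon$ along the direction of $[PQ]$ and a perpendicular component of length $\sqrt{\rho^2 - h^2/4} - \sqrt{\rho^2 - (h/2+\epsilon)^2}$. Using the mean value theorem applied to $\sqrt{\cdot}$ together with $\epsilon \le h/4$ and the hypothesis $\sqrt{10}\,(h+2\epsilon)/2 \le \rho$, which guarantees that $\sqrt{\rho^2-(h/2+\epsilon)^2}$ is bounded below by a fixed fraction of $\rho$, I would bound this perpendicular component by $\epsilon$. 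Combining the two orthogonal components via the Pythagorean theorem then gives $\dist(T,\breve T) \le 2\sqrt 2\,\epsilon$, which is the second term in the assertion.

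For the rotation $\breve T \to \tilde T$, the centre lies at distance $\rho$ from the pivot, so the displacement is at most $\rho\sin\alpha \le \rho\tan\alpha$. From Figure~\ref{fig:min-step:rad}, $\tan\alpha = \epsilon/(h/2-\epsilon)$, and $\epsilon \le h/4$ yields $\tan\alpha \le 4\epsilon/h$. An additional factor of $\sqrt 2$ enters from the angle between the chord from the pivot to $\breve T$ and the direction of effective displacement of the centre (cf.~the $45^\circ$-type geometry already appearing in Figure~\ref{fig:min-step:cond}), producing $\dist(\breve T,\tilde T) \le \sqrt{32}\,\rho\epsilon/h$. Summing the two bounds gives the claim. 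The main obstacle will be trigonometric bookkeeping: identifying the correct pivot (it is the $\tilde Q$ fixed during step~\ref{it:fin-rot} in the construction of Figure~\ref{fig:min-step:rot}), and invoking the standing hypothesis $\sqrt{10}\,(h+2\epsilon)/2 \le \rho$ precisely where it is needed so that the constants $\sqrt{32}$ and $2\sqrt 2$ come out sharp rather than being enlarged by spurious $\rho/h$ factors in the translation estimate.
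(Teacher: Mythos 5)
You have correctly identified the paper's strategy — decompose the displacement $T \to \tilde T$ through the intermediate centre $\breve T$ of Figure~\ref{fig:min-step:rad}, splitting into a rotation part and a shift part with $\tan\alpha = \epsilon/(\nicefrac h2 - \epsilon)$ — but the bookkeeping is inconsistent, and the final constant comes out right only through compensating errors. The paper writes $\tilde T = R\breve T + (-\epsilon,\epsilon)^\T$ with $Q$ as origin and bounds \emph{three} terms: the rotation term $\pNormn{R\breve T - \breve T} \le \FNormn{R-I}\,\rho$, the centre-shift term $\pNormn{\breve T - T}$, and the translation $\pNormn{(-\epsilon,\epsilon)^\T} = \sqrt 2\,\epsilon$. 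In your two-piece split this translation vector is neither in the rotation piece nor, by your own argument, in the piece $\dist(T,\breve T)$: a parallel component of length $\epsilon$ and a perpendicular component bounded by $\epsilon$ give $\dist(T,\breve T) \le \sqrt 2\,\epsilon$ by the Pythagorean theorem, not $2\sqrt 2\,\epsilon$ as you claim. The extra $\sqrt 2\,\epsilon$ appears without justification; it happens to make the total come out right because you have simultaneously dropped the translation from the other piece.

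The rotation piece also contains an error: the displacement of a point at distance $\rho$ from the pivot under a rotation by $\alpha$ is $2\rho\sin(\nicefrac\alpha2)$, not $\rho\sin\alpha$ (the latter is strictly smaller, so it cannot serve as an upper bound, although your subsequent passage to $\rho\tan\alpha$ restores validity since $2\sin(\nicefrac\alpha2)\le\tan\alpha$). More importantly, the ``additional factor of $\sqrt 2$'' that you attribute to a $45^\circ$ chord-direction has no such geometric origin. In the paper this $\sqrt 2$ is simply the slack in bounding $\pNormn{R\breve T - \breve T}$ via the Frobenius norm, $\FNormn{R-I} = 2\sqrt 2\,\sin(\nicefrac\alpha2)$, rather than the operator norm $2\sin(\nicefrac\alpha2)$, combined with the Mollweide-type bound $\sin(\nicefrac\alpha2) \le 2\epsilon/h$ under $\epsilon\le\nicefrac h4$. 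To repair your proof, keep $\tilde T = R\breve T + (-\epsilon,\epsilon)^\T$ explicit and bound the three resulting terms separately; the hypothesis $\sqrt{10}\,(h+2\epsilon)/2\le\rho$ is used only to show that the perpendicular component of $\breve T - T$ does not exceed the horizontal component $\epsilon$, giving $\pNormn{\breve T - T}\le\sqrt 2\,\epsilon$.
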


\begin{proof}
  Choosing $Q$ as reference point in Figure~\ref{fig:min-step:err},
  $\tilde T$ is essentially a rotation followed by a shift, see
  Figure~\ref{fig:min-step:rad}.  More precisely, we have
  \begin{equation*}
    \tilde T = R \breve T +
    (\begin{smallmatrix}
      - \epsilon \\ \epsilon
    \end{smallmatrix})
    \qquad\text{with}\qquad
    R =
    \begin{pmatrix}
      \cos \alpha & - \sin \alpha \\
      \sin \alpha & \cos \alpha
    \end{pmatrix},
  \end{equation*}
  where $\tan \alpha = \epsilon / (\nicefrac h 2 - \epsilon)$.
  Preliminary, we establish the following two estimates:
  \begin{enumerate}
  \item the \PFrobenius\ norm distance between rotation and identity
    is given by
    \begin{equation*}
      \FNormn{R - I}^2
      = 2 \, \bigl( (1 - \cos \alpha)^2 + (\sin \alpha)^2 \bigr)
      = 4 \, ( 1 - \cos \alpha)
      = 8 \, (\sin \nicefrac \alpha 2)^2;
    \end{equation*}
    further \PMollweide's formula \cite[Eq~(\Jupiter)]{Mol08} together
    with $\epsilon \le \nicefrac h 4$ implies
    \begin{equation*}
      \FNormn{R-I}^2
      \le 8 \, \Bigl( \tfrac{\epsilon}{(\nicefrac h2 - \epsilon) +
        \sqrt{(\nicefrac h2 - \epsilon)^2 + \epsilon^2}} \Bigr)^2
      \le 32 \, \tfrac{\epsilon^2}{h^2};
    \end{equation*}
  \item $T$ and $\breve T$ lie on the circle of radius $\rho$; since
    $\sqrt{10} \, (h + 2 \epsilon) / 2 \le \rho$ implies $\nicefrac h2 + \epsilon \le
    \nicefrac{\sqrt 2}2 \, \rho$, the vertical distance is less than the
    horizontal distance; we thus infer
    \begin{equation*}
      \pNormn{\breve T - T}
      \le \sqrt 2 \, \epsilon.
    \end{equation*}
  \end{enumerate}
  Exploiting these two bounds, we finally obtain
  \begin{align*}
    \pNormn{\tilde T - T}
    &\le \pNormn{R \breve T - \breve T}
      + \pNormn{\breve T - T}
      + \pNormn{
      \begin{smallmatrix}
        -\epsilon \\ \epsilon
      \end{smallmatrix}
    }
    \\[\fskip]
    &\le \FNormn{R - I} \, \rho
      + \pNormn{\breve T - T}
      + \sqrt 2 \, \epsilon
    \\[\fskip]
    &\le \sqrt{32} \, \tfrac \rho h \, \epsilon + 2\sqrt{2} \,
      \epsilon. 
      \tag*{\qed}
  \end{align*}
\end{proof}

The essential statement behind \thref{lem:dist-T} is that the geometry
exploited to establish minimal and maximal step sizes does only
slightly change if the projection to obtain $P$ and $Q$ becomes
inaccurate.  Based on the projection error $\epsilon > 0$, we define
\begin{equation*}
  \delta_h \coloneqq \delta_h (\epsilon)
  = \bigl( \sqrt{32} \, \tfrac \rho h + 2 \sqrt{2}
    \bigr) \, \epsilon.
\end{equation*}

\begin{figure}[t]
  \centering
  \includegraphics{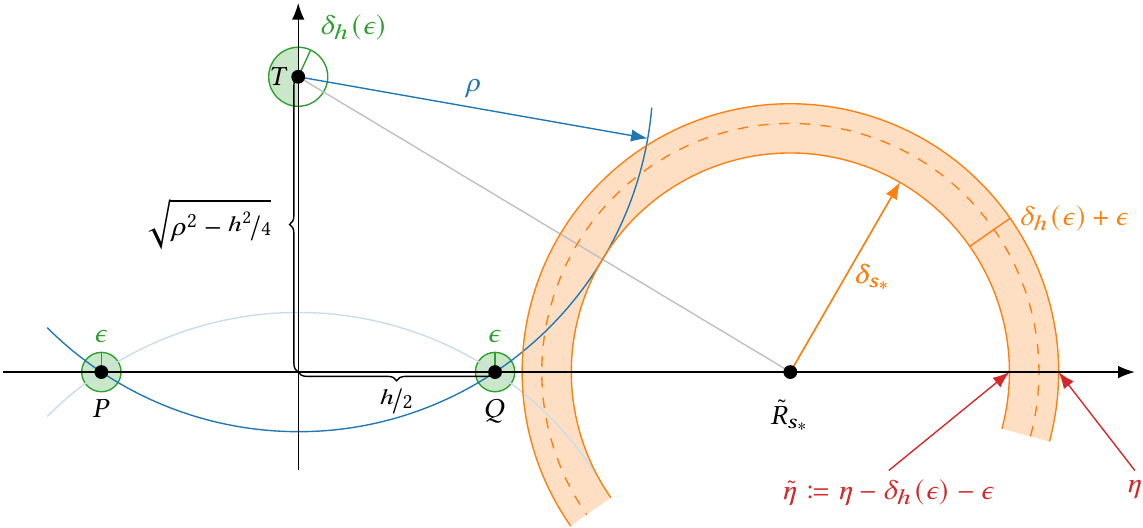}
  \caption{Incorporating the {\color{Ter}errors $\delta_h(\epsilon)$}
    and {\color{Ter}$\epsilon$} into the minimal and maximal step size
    derived in Theorem~\ref{thm:step-size-guar}.  Here $s_*$ is lessen to
    guarantee {\color{Qua}$\dist(Q,R) \le \tilde \eta$} without
    uncertainties.  Adding them, we have
    {\color{Qua}$\dist(Q,R) \le \eta$} at the most.  The minimal step
    size is adapted similarly.}
  \label{fig:min-step:final}
\end{figure}

\begin{theorem}[Step Size Guarantee]
  \label{thm:step-size-guar:inex}
  Let $\gamma \colon [0, \ell(\gamma)] \to \BR^d$ be a $\rho$-separated
  \PJordan\ $C^2$-arc, let $\epsilon > 0$ be the maximal numerical error of
  $\proj_\gamma$, and let $P$ and $Q$ be distinct points such that
  $\dist(P,\gamma) \le \epsilon$, $\dist(Q,\gamma) \le \epsilon$,
  $h \coloneqq \dist(P,Q) \le \eta$ for
  \raisebox{0pt}[0pt][0pt]{$\sqrt{10} \, (h + 2 \epsilon)/2 \le \rho$}, and
  $\epsilon \le \nicefrac h 4$.  If
  $s_* \coloneqq (\tilde\eta^2 + 2 \tilde\eta \rho) / (2 \rho
    + 2 \tilde\eta +h) \ge \epsilon$ where
  $\tilde \eta \coloneqq \eta - \delta_h(\epsilon) - \epsilon$, then
  the numerical projection
  $R \approx_\epsilon \proj_\gamma(Q + s_* v)$ with
  $v \coloneqq (PQ)^\rightarrow / \dist(P,Q)$
  satisfies
  \begin{equation*}
    \tfrac 6{80} \, \eta - \tfrac{86}{80} \, (\delta_h(\epsilon) + \epsilon)
    \le \dist(R,Q) \le \eta,
  \end{equation*}
  or, if the lower bound is positive and does not hold, $R$ is an
  approximate end point of $\gamma$.
\end{theorem}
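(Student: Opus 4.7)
The plan is to reduce the inexact setting to the exact one of Theorem~\ref{thm:step-size-guar} by pessimizing the geometry via the rotation construction already prepared in Lemmas~\ref{lem:cond-rot} and \ref{lem:dist-T}. Because $P$ and $Q$ are only approximations of true curve points, the forbidden normal-ring balls of $\gamma$ no longer need to touch $P,Q$ but merely to intersect $B_\epsilon(P)$ and $B_\epsilon(Q)$. Under the three hypotheses $\sqrt{10}(h+2\epsilon)/2\le\rho$, $\epsilon\le h/4$ and $s_*\ge\epsilon$, Lemma~\ref{lem:cond-rot} allows us to replace every such ball by the standardized ball of Figure~\ref{fig:min-step:rot} whose chord endpoints $\tilde P,\tilde Q$ satisfy $\dist(\tilde P,\tilde Q)=h+2\epsilon$; this standardization only enlarges the worst-case projection distance of $\tilde R_s\coloneqq Q+s\,v$. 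Lemma~\ref{lem:dist-T} then bounds the displacement between the center $\tilde T$ of the standardized ball and the center $T$ of the corresponding clean ball from Figure~\ref{fig:min-step} by $\delta_h(\epsilon)$, reducing our situation to the clean geometry up to a uniform translation of size $\delta_h(\epsilon)$.

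Once in this reduced geometry, the computation in the proof of Theorem~\ref{thm:step-size-guar} applies verbatim with the diminished budget $\tilde\eta=\eta-\delta_h(\epsilon)-\epsilon$ in place of $\eta$. The chosen $s_*=(\tilde\eta^2+2\tilde\eta\rho)/(2\rho+2\tilde\eta+h)$ is precisely the step size for which the exact-geometry calculation yields the tight upper bound $\dist(R_*,Q)\le\tilde\eta$ and the lower bound $\dist(R_*,Q)\ge\tfrac{6}{80}\tilde\eta$, where $R_*\coloneqq\proj_\gamma(Q+s_* v)$. The condition $s_*\ge\epsilon$ is exactly the compatibility condition of Lemma~\ref{lem:cond-rot} that licenses the rotation step in the reduction. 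Passing from the clean setting to the inexact one costs $\delta_h(\epsilon)$ on each side (the geometric shift of $\tilde T$ relative to $T$ from Lemma~\ref{lem:dist-T}) and a further $\epsilon$ on each side (since we only have $R\approx_\epsilon R_*$), yielding
\begin{equation*}
  \tfrac{6}{80}\tilde\eta-\delta_h(\epsilon)-\epsilon
  \le\dist(R,Q)\le
  \tilde\eta+\delta_h(\epsilon)+\epsilon.
\end{equation*}
Substituting $\tilde\eta=\eta-\delta_h(\epsilon)-\epsilon$ collapses the upper bound to $\eta$ and the lower bound to $\tfrac{6}{80}\eta-\tfrac{86}{80}(\delta_h(\epsilon)+\epsilon)$, matching the claim; Figure~\ref{fig:min-step:final} shows the bookkeeping.

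For the terminal alternative, if the lower bound is violated in the reduced geometry, the same argument as in Theorem~\ref{thm:step-size-guar} forces $\gamma$ to end within the normal-ring neighbourhood of $Q$; the numerical $R$ then lies within $\epsilon$ of that end point, which is the sense in which $R$ is an \emph{approximate end point}. The main obstacle is the careful disentanglement of the two independent error sources—the geometric uncertainty $\delta_h(\epsilon)$ induced by the $\epsilon$-neighbourhoods of $P$ and $Q$, and the pointwise projection noise $\epsilon$—and verifying that under $\sqrt{10}(h+2\epsilon)/2\le\rho$, $\epsilon\le h/4$ and $s_*\ge\epsilon$ the standardization of Lemma~\ref{lem:cond-rot} indeed dominates every admissible ball of the normal-ring family, so that the clean-geometry estimates can be transferred without loss.
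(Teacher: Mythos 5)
Your proposal is correct and follows essentially the same route as the paper: the hypotheses license the rotation/standardization of Lemmas~\ref{lem:cond-rot} and \ref{lem:dist-T}, \thref{thm:step-size-guar} is applied with the reduced budget $\tilde\eta = \eta - \delta_h(\epsilon) - \epsilon$, and the two error sources $\delta_h(\epsilon)$ and $\epsilon$ are added on both sides before the substitution collapses the bounds to $\tfrac{6}{80}\eta - \tfrac{86}{80}(\delta_h(\epsilon)+\epsilon) \le \dist(R,Q) \le \eta$. Your explicit treatment of the approximate-end-point alternative is consistent with (and slightly more detailed than) the paper's argument.
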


\begin{proof}
  The assumptions of the theorem ensure that the necessary rotations
  leading to the geometry in Figure~\ref{fig:min-step:err} enlarge the
  maximal projection distance.  The difference to the situation in
  \thref{thm:step-size-guar} and Figure~\ref{fig:min-step} is that $T$
  is known only approximately resulting in an uncertainty of the
  maximal projection distance, see Figure~\ref{fig:min-step:final}.
  On the basis of the step size
  $s_* \coloneqq (\tilde\eta^2 + 2 \tilde\eta \rho) / (2 \rho + 2
  \tilde\eta +h)$ with
  $\tilde \eta \coloneqq \eta - \delta(\epsilon) - \epsilon$,
  \thref{thm:step-size-guar} ensures
  \begin{equation*}
    \tfrac{6}{80} \, \tilde \eta \le \dist(R,Q) \le \tilde \eta
  \end{equation*}
  for the error-free setting.  Including the uncertainty
  $\delta_h(\epsilon)$ of the maximal projection distance and the
  projection error $\epsilon$, we infer
  \begin{equation*}
    \tfrac{6}{80} \, \tilde \eta - \delta_h - \epsilon
    \le \dist(R,Q)
    \le \tilde \eta + \delta_h + \epsilon
  \end{equation*}
  yielding the assertion.  \qed
\end{proof}

The lower bound of the inexact step size guarantee depends on the
current $h$; so the lower bound may become arbitrary small by
iterating the result.  Proposing an additional assumption, we may
nevertheless guarantee that a slight modification of
\thref{alg:curve-app} recovers the underlying \PJordan\ curve up to a certain
\PHausdorff\ distance even if the projection to the curve cannot be
performed exactly.

\begin{theorem}[Termination]
  \label{thm:termi-inex}
  Let $\gamma \colon [0, \ell(\gamma)] \to \BR^d$ be a
  $\rho$-separated \PJordan\ $C^2$-arc, and let
  $\epsilon \in [0, \nicefrac \rho 4]$ be the maximal numerical error
  of $\proj_\gamma$.  Adapting \thref{alg:curve-app} by choosing
  \begin{equation*}
    \eta \coloneqq 2  \min \bigl\{ 10^{-\nicefrac 12} \rho
    , 2 \sqrt{\rho^2 - (\rho + \epsilon - E)^2}
    \bigr\} - 2 \epsilon
    \qquad\text{and}\qquad
    s \coloneqq s_h \coloneqq
    \frac{\tilde\eta^2_h + 2 \tilde\eta_h \rho}{2 \rho
    + 2 \tilde\eta_h +h}
  \end{equation*}
  with $E \in (\epsilon,\rho)$ and
  $\tilde \eta_h \coloneqq \eta -
  \delta_h(\epsilon) - \epsilon$ yields a polygonal chain
  $\tilde \gamma$ with $\dist(\tilde \gamma, \gamma) \le E$ in finite
  time, \ie\ \thref{alg:curve-app} terminates, if
  \;$\nicefrac{6}{80} \, \eta \le \dist(P_0, P_1) \le \eta$ and
  \begin{equation*}
    \tfrac 6{80}
    \, \eta -  \tfrac{86}{80} \, (2 \sqrt 2 +1) \, \epsilon
    \ge
    \bigl( \tfrac{86}{20} \, \sqrt{32} \, \rho \epsilon \bigr)^{\nicefrac12}.
  \end{equation*}
\end{theorem}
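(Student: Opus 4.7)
The plan is to reduce the inexact-projection case to the error-free \thref{thm:termi} by ensuring that \thref{thm:step-size-guar:inex} applies at every iteration and that the resulting recursion keeps the segment lengths above a positive threshold. First, I would verify that the adapted choice of $\eta$ validates every geometric hypothesis used downstream. The summand $10^{-\nicefrac 12}\rho$ enforces $\sqrt{10}\,(h+2\epsilon)/2 \le \rho$ whenever $h \le \eta$, which is exactly the precondition of \thref{lem:cond-rot}, \thref{lem:dist-T}, and therefore of \thref{thm:step-size-guar:inex}. The term $2\sqrt{\rho^2 - (\rho + \epsilon - E)^2}$ is tuned so that \thref{cor:app-err}, applied with endpoints known up to $\epsilon$ and spacing $h \le \eta$, yields an approximation error at most $E$ on each segment. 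The restrictions $\eta < \rho$ and $\epsilon \le \nicefrac{\rho}{4}$ finally keep all queried points inside the single-valued projection zone of \thref{thm:single-proj}.

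I would then run an induction on the segment index $n$ preserving the invariant $\dist(P_{n-1},P_n) \in [h_\star, \eta]$ for a positive constant $h_\star$ yet to be pinned down. The initialisation is the hypothesis $\nicefrac{6}{80}\,\eta \le \dist(P_0,P_1) \le \eta$. In the inductive step, \thref{thm:step-size-guar:inex} applied with $h \coloneqq \dist(P_{n-1},P_n)$ delivers $\dist(P_n,P_{n+1}) \le \eta$ and, after inserting $\delta_h(\epsilon) = (\sqrt{32}\,\rho/h + 2\sqrt{2})\,\epsilon$,
\begin{equation*}
  \dist(P_n,P_{n+1}) \ge L(h) \coloneqq L_0 - \tfrac{C}{h},
  \quad
  L_0 \coloneqq \tfrac{6}{80}\,\eta - \tfrac{86}{80}\,(2\sqrt{2}+1)\,\epsilon,
  \quad
  C \coloneqq \tfrac{86}{80}\,\sqrt{32}\,\rho\,\epsilon.
\end{equation*}
Should the lower guarantee of \thref{thm:step-size-guar:inex} fail, the theorem certifies $P_{n+1}$ as an approximate end point, and the iteration legitimately stops in that direction.

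The main obstacle is pinning down $h_\star$ in a self-consistent way, and this is precisely where the square-root hypothesis enters. The assumed bound is exactly $L_0 \ge 2\sqrt{C}$, i.e.\ $L_0^2 \ge 4C$, which is the discriminant condition for the fixed-point equation $L(h) = h$, equivalently $h^2 - L_0 h + C = 0$, to admit the two real roots $h_\pm = (L_0 \pm \sqrt{L_0^2 - 4C})/2$. Since $L$ is strictly increasing on $(0,\infty)$ and $L(h_-) = h_-$, the interval $[h_-,\eta]$ is forward invariant under $h \mapsto L(h)$; the choice $h_\star \coloneqq h_-$ therefore closes the induction once the auxiliary preconditions $\epsilon \le h_-/4$ and $s_{h_-} \ge \epsilon$ of \thref{thm:step-size-guar:inex} have been read off from the hypothesis. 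Because each segment then has length at least $h_\star > 0$ and $\gamma$ has finite arc length, iterations~(4) and~(5) of \thref{alg:curve-app} produce at most $\lceil \ell(\gamma)/h_\star\rceil$ vertices before the end-point criterion triggers, so the adapted algorithm halts. A final segment-wise application of \thref{cor:app-err} gives $\dist(\tilde\gamma,\gamma)\le E$, completing the proof.
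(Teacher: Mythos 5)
Your plan coincides with the paper's: both reduce the lower step--size guarantee of \thref{thm:step-size-guar:inex} to the scalar fixed--point inequality $h \le L(h) = L_0 - C/h$, observe that this is the quadratic $h^2 - L_0 h + C \le 0$, recognise the displayed hypothesis as exactly the discriminant condition $L_0 \ge 2\sqrt{C}$, and then close an induction by showing that once the segment lengths enter the resulting invariant interval they stay there, so finitely many steps suffice. The one substantive difference is your choice of threshold. You take $h_\star = h_-$, the left root of $h^2 - L_0 h + C = 0$; the paper takes $\tilde h = L_0/2$, the midpoint of $[h_-,h_+]$. This is not merely cosmetic, because the preconditions $\epsilon \le h_\star/4$ and $s_{h_\star}\ge\epsilon$ of \thref{thm:step-size-guar:inex} must be verified \emph{at the threshold}, and you explicitly defer this (``once the auxiliary preconditions \dots have been read off from the hypothesis''). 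With the midpoint the verification is one line: $L_0/2 \ge \sqrt{C} = \bigl(\tfrac{86}{80}\sqrt{32}\,\rho\epsilon\bigr)^{\nicefrac12} \ge \bigl(\tfrac{86}{20}\sqrt{32}\bigr)^{\nicefrac12}\epsilon \ge 4\epsilon$ using $\rho \ge 4\epsilon$, and then $s_{\tilde h}\ge \tfrac{2}{5}\tilde\eta_{\tilde h}\ge \tfrac{6}{80}\tilde\eta_{\tilde h}\ge 4\epsilon$. For $h_-$ the corresponding bound is not immediate: you need the chain $h_- = C/h_+ \ge C/L_0$ together with the $10^{-\nicefrac12}\rho$ cap inside the definition of $\eta$ to force $L_0 \le \tfrac{12}{80\sqrt{10}}\rho$, which then gives $h_- \gtrsim 128\,\epsilon \ge 4\epsilon$. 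So your argument is salvageable, but as written it leaves open precisely the step the paper's midpoint choice is designed to trivialise. I would either switch to $\tilde h = L_0/2$ or supply the $h_- \ge C/L_0$ estimate explicitly. The rest --- the role of the $10^{-\nicefrac12}\rho$ term for \thref{lem:cond-rot} and \thref{lem:dist-T}, the tuning of $\eta$ via \thref{cor:app-err}, the monotonicity of $L$ giving forward invariance of $[h_\star,\eta]$, and the finite--length argument for termination --- matches the paper.
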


\begin{proof}
  The statement may be established analogously to \thref{thm:termi} as
  long as the guaranteed minimal step size
  \begin{equation*}
    \lambda(h) \coloneqq
    \tfrac 6{80} \, \eta - \tfrac{86}{80} \, (\delta_h(\epsilon) +
    \epsilon)
    = \tfrac{6}{80} \, \tilde \eta_h - \delta_h(\epsilon) - \epsilon
  \end{equation*}
  in \thref{thm:step-size-guar:inex} does not vanish or becomes
  negative.  First notice that $\delta_h(\epsilon)$ is inversely
  depending on $h$ such that $\lambda(h)$ and $\tilde \eta_h$
  become monotonically increasing with respect to $h$.  Let us now
  assume that there exists an $\tilde h$ such that
  \begin{equation*}
    4\epsilon \le \tilde h
    \le \lambda(\tilde h)
    \le \tfrac{6}{80} \, \tilde \eta_{\tilde h}.
  \end{equation*}
  For any $h \in [\tilde h, \eta]$, the step size $s_h$ is bounded
  from below by
  \begin{equation*}
    s_h
    \ge \tfrac25 \, \tilde \eta_h
    \ge \tfrac{6}{80} \,  \tilde \eta_{\tilde h}
    \ge 4 \epsilon
    \ge  \epsilon.
  \end{equation*}
  We further have $\epsilon \le \nicefrac{\tilde h}4 \le \nicefrac h4$
  and $\sqrt{10} \, (h + 2\epsilon) / 2 \le \rho$ due to the choice of
  $\eta$; so for each $h \in [\tilde h, \eta]$ the requirements of
  \thref{thm:step-size-guar:inex} are satisfied.  Since
  $h_1 \coloneqq \dist(P_0, P_1)$ is contained in $[\tilde h, \eta]$,
  \thref{thm:step-size-guar:inex} yields
  \begin{equation*}
    \eta \ge \dist(P_1,P_2)
    \ge \lambda(h_1)
    \ge \lambda(\tilde h)
    \ge \tilde h
  \end{equation*}
  and, iteratively, $\dist(P_k, P_{k+1}) \in [\tilde h, \eta]$.  For
  $\tilde h > 0$, we thus obtain the assertion.

  It remains to ensure the existence of $\tilde h > 0$.  A rearrangement
  of the inequality
  \begin{equation*}
    \tilde h
    \le \lambda(\tilde h)
    = \tfrac{6}{86} \, \eta - \tfrac{86}{80} \, \bigl( \sqrt{32} \,
    \tfrac \rho{\tilde h} + 2 \sqrt2 + 1 \bigr) \epsilon.
  \end{equation*}
  yields the quadratic inequality
  \begin{equation*}
    \tilde h^2 + \bigl( \tfrac{86}{80} \, (2\sqrt2 + 1) \, \epsilon -
    \tfrac6{80} \, \eta \bigr) \, \tilde h + \tfrac{86}{80} \,
    \sqrt{32} \, \rho\epsilon \le 0.
  \end{equation*}
  Due to the assumptions of the theorem, we know that the discriminant
  \begin{equation*}
    \Delta \coloneqq
    \bigl( \tfrac{86}{80} \, (2 \sqrt2 +1) \, \epsilon - \tfrac{6}{80}
    \, \eta \bigr)^2 - \tfrac{86}{20} \, \sqrt{32} \, \rho \epsilon
  \end{equation*}
  is non-negative, and that the
  centre of the non-empty solution interval satisfies
  \begin{equation*}
    \tilde h \coloneqq \tfrac 12 \, \bigl( \tfrac 6{80}
    \, \eta -  \tfrac{86}{80} \, (2 \sqrt 2 +1) \, \epsilon \bigr) 
    \ge \bigl(\tfrac{86}{20} \, \sqrt{32}
    \bigr)^{\nicefrac12} \, \epsilon
    \ge  4 \epsilon,
  \end{equation*}
  where we exploit $\rho \ge 4 \epsilon$ and again the last
  assumption.  The existence of an appropriate $\tilde h > 0$ is thus
  guaranteed.  \qed
\end{proof}

\begin{remark}
  Due to the assumptions, the maximal step size $\eta$ is always
  positive.  If $\epsilon$ becomes small, then the additional
  assumption in \thref{thm:step-size-guar:inex} are always
  satisfiable.  The other way round, for a given $E$ and $\rho$, the
  theorem enable us to calculate an upper bound for the maximal
  projection error $\epsilon$ that guarantees the succuss of
  \thref{alg:curve-app} by solving a simple quadratic equation.  \qed
\end{remark}

\section{Identification of Sleeve Functions}
\label{sec:ident-curve-sleeve}

Besides the underlying \PJordan\ curve, the structure function $g$ has
to be approximated too.  Having a non-curve point $x$ and its
projection $y \coloneqq \proj_\gamma(x)$, we have immediate access to
$g$ via
\begin{equation*}
  g(t^2)
  = g \Bigl(\dist \bigl(y + t \, \tfrac{x - y}{\pNormn{x-y}}, \gamma
  \bigr)^2 \Bigr)
  = f\bigl(y + t \, \tfrac{x - y}{\pNormn{x-y}} \bigr)
\end{equation*}
for $t \ge 0$ until an ambiguity point is hit.  Since $\gamma$ is a
finite-length \PJordan\ arc, we henceforth assume
$\ran \gamma \subset B_{\nicefrac12}$ for simplicity.  Other domains
may be considered analogously.  Restricting our interest to an
approximation of $f$ on $B_{\nicefrac12}$, we only have to determine
$g$ on the interval $[0,1]$.  We have here two possibilities: either
$g$ is approximated directly or $t \mapsto g_2(t) \coloneqq g(t^2)$ is
approximated.  We choose the second approach because $g_2$ immediately
represent the slopes of the sleeve function.

For simplicity, we approximate $g_2$ by a linear spline with
equispaced knots.  Beneficially, this simplifies the inversion in
\thref{alg:proj-cur}, where we can take $g^{-1}_2(z)$ instead of the
step size $\sqrt t$.  The approximation of $g_2$ with step size
$\sigma > 0$ may be incorporated into \thref{alg:curve-app} for a
$\rho$-separated \PJordan\ curve in the following manner:
\begin{enumerate*}
\item From the start point $x_0$, the structure function $g_2$ is
  sampled equispaced in direction $\nabla f(x_0)$ until the curve or an
  ambiguity point is hit.  
\item This especially gives an approximation of $g_2$ on
  $[0,\rho]$ such that \thref{alg:curve-app} can be performed.
\item Determine the point $x = \gamma(t)$ with the largest norm.
  Note that $x \perp \dot \gamma(t)$, that $\proj_\gamma((1+s) x) = x$ for
  $s \ge 0$, and that the ray $\{(1+s) x : s \ge 0 \}$ cannot contain
  any ambiguity point; so the approximation of $g_2$ may be extended
  onto $[0,1]$ by sampling in this direction. 
\end{enumerate*}

\begin{algorithm}[Sleeve Function Approximation]
  \label{alg:sleeve-app}
  {\scshape Input:} $\rho, \sigma > 0$,
  $\epsilon \in [0, \nicefrac \rho 4]$, $E \in (\epsilon, \rho)$,
  $x_0 \in \BR^d$ where $\nabla f(x_0)$ is well defined.
  \\
  {\scshape Initial Approximation of $g_2$:}
  \begin{enumerate}[(1),nosep]
  \item Compute $v \coloneqq \nabla f(x_0) / \pNormn{\nabla f(x_0)}$.
  \item Collect $f_k \coloneqq f(x_0 - k \sigma v)$ for
    $k = 0,1,\dots$ until $f_k - f_{k-1} > 0$.
  \item Collect $f_{-\ell} \coloneqq f(x_0 + \ell \sigma v)$ for
    $\ell = 1,2,\dots$ until $\sigma \, (k + \ell - 1) \ge \rho$.
  \item Use $(f_j)_{j=-\ell+1}^{k-1}$ to approximate $g_2$ on
    $[0, \sigma \, (k+\ell-1)]$ by a linear spline $\tilde g_2$.
  \end{enumerate}
  {\scshape Underlying \PJordan\ Curve:}
  \begin{enumerate}[resume*]
  \item Apply \thref{alg:curve-app} modified as in
    \thref{thm:termi-inex} to obtain
    $\tilde \gamma \coloneqq \bigcup_{j=-m+1}^{n-2} P_{j} P_{j+1}$.
  \end{enumerate}
  {\scshape Final Approximation of $g_2$:}
  \begin{enumerate}[resume*]
  \item Determine $x \coloneqq \argmax_{\{P_j : j = -m+1, \dots, n-1\}}
    \pNormn{P_j}$.
  \item Improve $x$ by setting $y \coloneqq (1+
    \rho / \pNormn{x}) \, x$
    and $x \coloneqq \proj_\gamma(y)$.
  \item Compute $v \coloneqq (y - x) / \pNormn{y - x}$.
  \item Collect $f_{r} \coloneqq f(x + r \sigma v)$ for
    $r = k + \ell, k + \ell + 1,\dots$ until $\sigma r \ge 1$.
  \item Use these to extend the linear spline $\tilde g_2$ to
    approximate $g_2$ on $[0,1]$. 
  \end{enumerate}
  {\scshape Output:} Polygonal chain $\tilde \gamma$, linear spline
  $\tilde g_2$ approximating $f$.
\end{algorithm}

\begin{remark}
  Note that the collected samples gives only rough informations about
  the root of $f(x_0 - t v)$, which is located somewhere around
  $\sigma \, (k-1)$.  To improve the approximation of $g_2$ in (4),
  which is crucial to compute $\proj_\gamma$ by \thref{alg:proj-cur},
  we therefore apply the \PNewton--\PRaphson\ method.  During the
  numerical experiments, we usually require only a few iteration
  to find the root with sufficient accuracy.  \qed
\end{remark}

Due to the approximation of $g_2$, the projection computed by
\thref{alg:proj-cur} becomes inexact since the true step size
$g_2^{-1}(z)$ is not available.  Depending on the local derivative of
$g_2$ the approximation error is here amplified or reduced; therefore,
we again assume that the projection error is again bounded by an
appropriate $\epsilon >0$.  If the first and second derivative of the
true structure function $g$ are bounded, and if the step size $\sigma$
is chosen appropriately, then the total approximation error for the
sleeve function $f$ may be controlled.

\begin{theorem}[Approximation Error]
  \label{thm:tot-app-err}
  Let
  $\gamma \colon [0, \ell(\gamma)] \to B_{\nicefrac12} \subset \BR^d$
  be a $\rho$-separated \PJordan\ $C^2$-arc, and let $g \in C^2([0,1], \BR_+)$
  be strictly monotonically increasing with
  \begin{equation*}
    M_1 \coloneqq \sup_{t \in [0,1]} 2 \, \absn{t \, \dot g(t^2)} < \infty
    \qquad\text{and}\qquad
    M_2 \coloneqq \sup_{t \in [0,1]} \absn{\dot g(t^2) + 2 t^2 \,
      \ddot g(t^2)} <
    \infty. 
  \end{equation*}
  If the projection error in \thref{alg:proj-cur} is bounded by
  $\epsilon > 0$, and if the requirements of \thref{thm:termi-inex}
  are satisfied, then \thref{alg:sleeve-app} with $E$ and $\sigma$
  approximates $f = g_2 \circ \dist(\cdot, \gamma)$ by
  $\tilde f = \tilde g_2 \circ \dist( \cdot, \tilde \gamma)$
  satisfying
  \begin{equation*}
    \sup_{x \in B_{\nicefrac12}} \absn{f(x) - \tilde f(x)}
    \le  M_1 E + \tfrac{M_2}{8} \, \sigma^2.
  \end{equation*}
\end{theorem}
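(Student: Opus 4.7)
The strategy is to separate the two sources of approximation error --- the polygonal replacement of the underlying curve and the piecewise-linear approximation of the profile --- and bound them independently via a triangle inequality. Writing $d(x) \coloneqq \dist(x,\gamma)$ and $\tilde d(x) \coloneqq \dist(x,\tilde\gamma)$, I split
\begin{equation*}
  |f(x) - \tilde f(x)|
  \le \underbrace{|g_2(d(x)) - g_2(\tilde d(x))|}_{(\mathrm{I})}
  + \underbrace{|g_2(\tilde d(x)) - \tilde g_2(\tilde d(x))|}_{(\mathrm{II})}.
\end{equation*}

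For term~(\textrm{I}), I note that $g_2'(t) = 2t\, \dot g(t^2)$, so $|g_2'|$ is bounded by $M_1$ on $[0,1]$ and hence $g_2$ is $M_1$-Lipschitz. The point-to-set distance function is $1$-Lipschitz with respect to Hausdorff perturbations of the set, so combining this with the curve-recovery guarantee $\dist(\gamma,\tilde\gamma) \le E$ supplied by \thref{thm:termi-inex} yields $|d(x) - \tilde d(x)| \le E$ and therefore $(\mathrm{I}) \le M_1 E$.

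For term~(\textrm{II}), a direct differentiation gives $g_2''(t) = 2\dot g(t^2) + 4 t^2 \ddot g(t^2)$, which is uniformly controlled by $M_2$ on $[0,1]$ through the hypothesis. By construction, $\tilde g_2$ is the continuous, piecewise-linear interpolant of $g_2$ on a uniform mesh of width $\sigma$ covering $[0,1]$. The classical second-order error estimate for piecewise-linear interpolation (obtained by applying \PRolle's theorem or the \PPeano\ kernel on each subinterval) then yields $\|g_2 - \tilde g_2\|_\infty \le \tfrac{\sigma^2}{8} \|g_2''\|_\infty \le \tfrac{M_2}{8}\sigma^2$, which is the desired profile contribution. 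Adding the two bounds produces the claim.

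The only subtle technical point is making sure the arguments $d(x), \tilde d(x)$ actually lie in the domain on which $g_2$ and $\tilde g_2$ have been built. For $x \in B_{\nicefrac12}$ one has $d(x) \le 1$ because $\gamma \subset B_{\nicefrac12}$; since $\tilde\gamma$ lies within Hausdorff distance $E < \rho$ of $\gamma$, the bound $\tilde d(x) \le 1$ holds up to the controlled slack $E$, and step~(9) of \thref{alg:sleeve-app} samples until $\sigma r \ge 1$ (and can trivially be extended to cover the slack by taking one additional node). This is the main bookkeeping obstacle; beyond it, the theorem is essentially a packaging of the previously established Hausdorff estimate and a standard spline bound, so no new geometric argument is required.
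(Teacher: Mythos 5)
Your proposal matches the paper's proof essentially verbatim: the same triangle-inequality split into a curve term and a profile term, the Lipschitz/mean-value bound with $\dot g_2(t)=2t\,\dot g(t^2)$ and the \PHausdorff\ estimate $E$ from \thref{thm:termi-inex} for the first term, and the standard $\nicefrac{\sigma^2\pNormn{\ddot g_2}_\infty}{8}$ linear-spline bound for the second. The only remark worth making is that you inherit the paper's own implicit identification of $\pNormn{\ddot g_2}_\infty$ with $M_2$ (strictly, $\ddot g_2(t)=2\bigl(\dot g(t^2)+2t^2\,\ddot g(t^2)\bigr)$), so this is a feature of the stated constant rather than a gap in your argument.
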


\begin{proof}
  The first derivatives of $t \mapsto g_2(t) \coloneqq g(t^2)$ are
  just
  \begin{equation*}
    \dot g_2(t) = 2t \, \dot g(t^2)
    \qquad\text{and}\qquad
    \ddot g_2(t) = 2 \dot g(t^2) + 4t^2 \, \ddot g(t^2).
  \end{equation*}
  Exploiting \thref{thm:termi-inex}, the mean value theorem, and the
  well-established approximation error for linear splines
  $\pNormn{g_2 - \tilde g_2}_\infty \le \nicefrac{\sigma^2
    \pNormn{\ddot g_2}_\infty}{8}$, see for instance \cite[Ch~6,
  §5.1]{HH91}, we have
  \begin{align*}
    \absn{f(x) - \tilde f(x)}
    &= \absn{g_2(\dist(x,\gamma)) - \tilde g_2(\dist(x, \tilde
      \gamma))}
    \\[\fskip]
    &\le \absn{g_2(\dist(x, \gamma)) - g_2(\dist(x, \tilde \gamma))}
      + \absn{g_2(\dist(x, \tilde \gamma)) - \tilde g_2(\dist(x,
      \tilde \gamma))}
    \\[\fskip]
    &\le M_1 E + \tfrac{M_2}8 \, \sigma^2.
      \tag*{\qed}
  \end{align*}
\end{proof}

\section{Numerical Examples}
\label{sec:numerical-example}

Besides the theoretical guarantees for the approximation of
curve-based sleeve functions by polygonal chains and linear splines,
we next present several examples to show that the established concepts
may be carried out numerically.  All algorithms and experiments have
been implemented in Julia\footnote{The Julia Programming Language --
  Version 1.5.0 (\url{https://docs.julialang.org})}.

Oddly enough, the main obstacles is here the numerical evaluation of
the true sleeve function $f(x) \coloneqq g(\dist(x, \gamma)^2)$ and
its derivatives---even if the profile $g$ and the underlying \PJordan\
curve $\gamma$ are known analytically---since all computations require
the projection onto $\gamma$.  The projection may be computed by
minimizing the function
\begin{align*}
  F(t)
  &\coloneqq \pNormn{x - \gamma(t)}^2
  \\
  \dot F(t)
  &= -2 \, \iProdn{x - \gamma(t)}{\dot \gamma(t)}
  \\
  \ddot F(t)
  &= 2 \, \pNormn{\dot \gamma(t)}^2 - 2 \, \iProdn{x - \gamma(t)}{\ddot\gamma(t)}
\end{align*}
over the interval $[0, \ell(\gamma)]$.  For this purpose, we use the
well-known \PNewton--\PRaphson\ method, \ie\
$t_{k+1} \coloneqq t_k - \nicefrac{\dot F(t_k)}{\ddot F(t_k)}$, where $F$ is
sampled equispaced to find an appropriate starting value $t_0$, and
where the end points are considered separately.  In general, the
projection of a point onto a parametric curve is a non-trivial problem
by itself, see for instance \cite{SXSY14,HW05,LT95,LHLP+18} and
references therein.  In contrast, the projection to the polygonal
chain $\gamma$ is straightforward---project to each line segment and
take the global minimizer.

\paragraph{\PArchimedes[ean] Spiral}

\begin{figure}[tp]
  \centering
  \begin{tabular}{ll}
    \subfloat[Recovered curve $\tilde \gamma$.]
    {\includegraphics{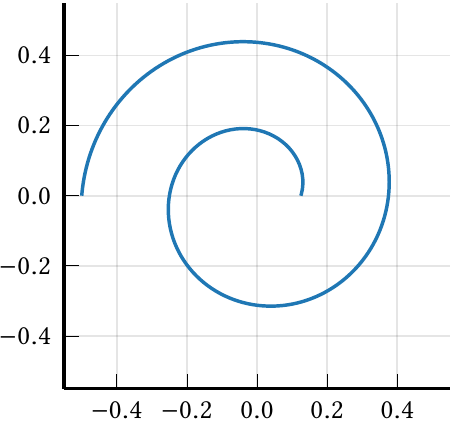}}
    & \subfloat[Recovered profile $\tilde g$.]
      {\hspace*{1.5pt}\includegraphics{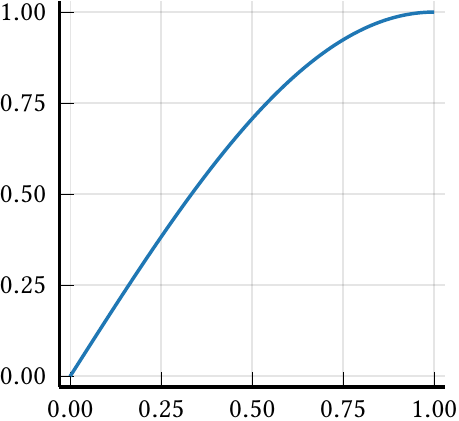}}
    \\
    \subfloat[Recovered sleeve function $\tilde f$.]
    {\includegraphics{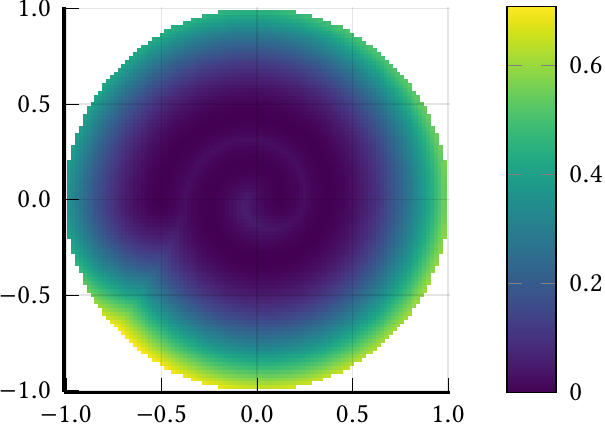}}
    & \subfloat[Absolute error $\absn{\tilde f - f}$.]
      {\includegraphics{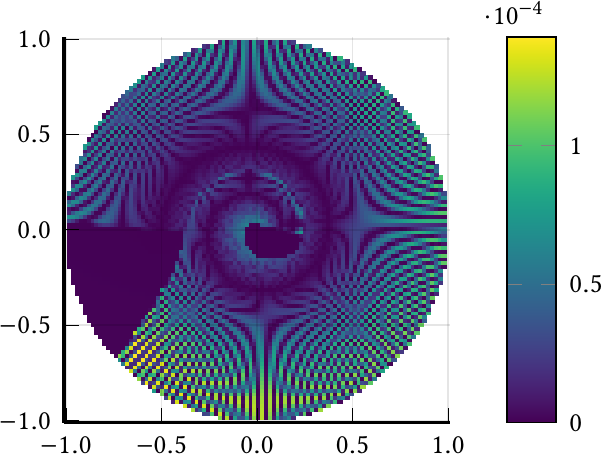}}
  \end{tabular}
  \caption{Sleeve function based on the \PArchimedes[ean] spiral and a
  sine profile.  The plots only show the recovered functions since
  they visually coincide with the true function.}
  \label{fig:spiral}
\end{figure}

The first considered sleeve function is based on the curve
\begin{equation*}
  \gamma \colon [0,1] \to \BR^2 :
  t \mapsto \bigl( \tfrac18 + \tfrac{3t}8 \bigr)
  \,
  \begin{pmatrix}
    \cos 3 \uppi t \\
    \sin 3 \uppi t
  \end{pmatrix},
\end{equation*}
which is a piece of an \PArchimedes[ean] spiral, and on the profile
function
\begin{equation*}
  g \colon [0,1] \to [0,1] :
  t \mapsto \sin(\nicefrac \uppi2 \, t).
\end{equation*}
The spiral is an $\nicefrac18$-separated $C^\infty$-curve.  Since the
maximal projection error is unknown, we perform \thref{alg:sleeve-app}
with $\epsilon \coloneqq 0$, \ie\ we assume that the error caused by
the approximation of the true profile is negligible.  For the
remaining parameters, we choose $E \coloneqq 10^{-3}$ and
$\sigma \coloneqq 10^{-4}$.  The result of the implemented methods is
shown in Figure~\ref{fig:spiral}.  Notice that the absolute
approximation error on $B_{\nicefrac12}$ is much smaller than $E$,
which bounds the error caused by the polygonal chain approximation of
the \PArchimedes[ean] spiral.  The error bound in
\thref{thm:tot-app-err} is here too pessimistic.

\paragraph{Three-dimensional Curve}

\begin{figure}[tp]
  \centering
  \subfloat[Recovered curve $\tilde \gamma$.]
  {\includegraphics{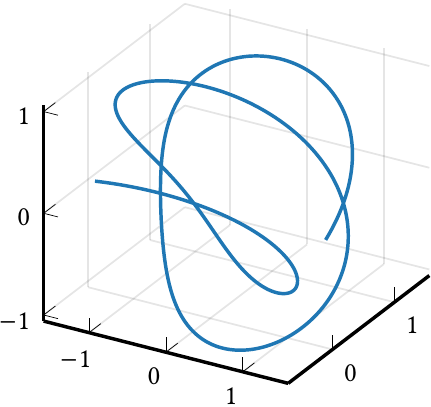}}
  \qquad
  \subfloat[Recovered profile $\tilde g$.]
  {\includegraphics{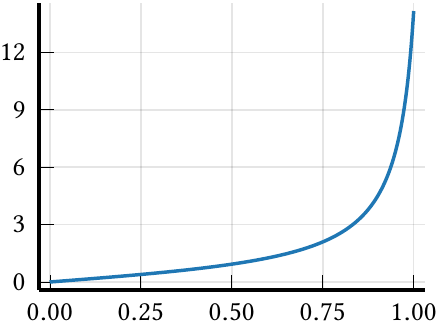}}
  \caption{Sleeve function based on a space curve and a tangent
    profile.  The plots only show the recovered functions since they
    visually coincide with the true function.}
  \label{fig:knot}
\end{figure}

Next, we consider the underlying curve
\begin{equation*}
  \gamma \colon [0,1] \to \BR^3 :
  t \mapsto
  \begin{pmatrix}
    (1 + \nicefrac12 \, \cos 4\uppi t) \, \cos 5\uppi t\\
    (1 + \nicefrac12 \, \cos 4\uppi t) \, \sin 5\uppi t\\
    \sin 4\uppi t
  \end{pmatrix}
\end{equation*}
together with the profile
\begin{equation*}
  g: [0,1] \mapsto \BR_+ :
  t \mapsto \tan (\nicefrac32 \, t).
\end{equation*}
The results of \thref{alg:sleeve-app} with $\epsilon \coloneqq 0$,
$E \coloneqq 10^{-2}$, and $\sigma \coloneqq 10^{-4}$ are shown in
Figure~\ref{fig:knot}.  Both---profile and curve---are well estimated.
Since the established theoretical approximation errors are completely
independent of the current dimension, we expect similar results for
the two one-dimensional approximation tasks in higher dimensions.

\paragraph{Finite differences}

\begin{figure}[t]
  \centering
    \subfloat[Recovered curve $\tilde \gamma$.]
    {\includegraphics{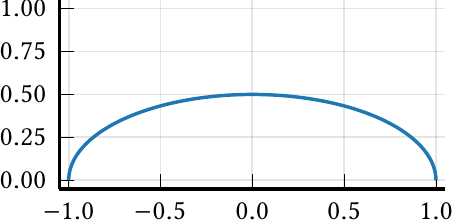}}
    \qquad
    \subfloat[Recovered profile $\tilde g$.]
      {\includegraphics{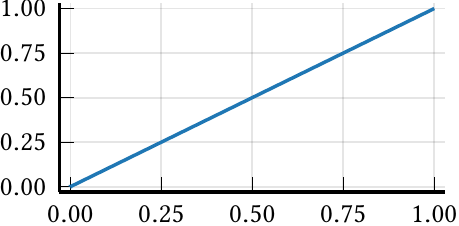}}
    \\
    \subfloat[Recovered sleeve function $\tilde f$ using symmetric differences.]
    {\includegraphics{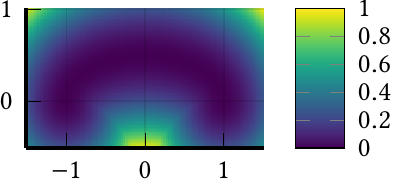}}
    \quad
     \subfloat[Absolute error using symmetric differences.]
     {\includegraphics{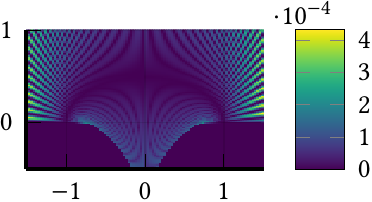}}
     \quad
     \subfloat[Absolute error using exact derivatives.]
      {\includegraphics{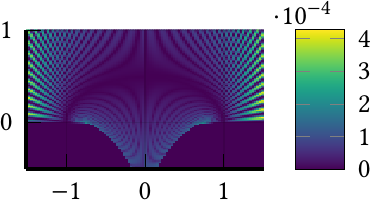}}
      \caption{Sleeve function based on a half-ellipse and an identity
        profile.  Note that the approximated profile $g_2$ is
        nevertheless non-linear.  The required derivatives are
        approximated by symmetric difference quotients.  The
        additional error is here marginal compared to the
        approximation using exact derivatives. }
  \label{fig:finite-diff}
\end{figure}

In the last two simulations, we consider the sleeve function with
respect to the $\nicefrac12$-separated half-ellipse
\begin{equation*}
  \gamma : [0,1] \to \BR^2 :
  t \mapsto
  \begin{pmatrix}
    \cos \uppi t\\
    \nicefrac12 \, \sin \uppi t
  \end{pmatrix}.
\end{equation*}
Up to now, we have assumed that we have access to the function values
and derivatives of the true sleeve function $f$.  If the derivatives
are not available, numerical differentiation may be used instead.  In
this numerical example, we approximate the required derivatives using
the symmetric difference quotient
\begin{equation*}
  [\nabla f(x)]_n \approx
  \frac{f(x + \tau e_n) - f(x - \tau e_n)}{2\tau}
\end{equation*}
for $n = 1, \dots, d$, where $e_n$ denotes the $n$th unit vector.
Letting the profile $g$ be the identity, which nevertheless results in
a locally non-linear sleeve function, and applying
\thref{alg:sleeve-app} with the parameter set $\epsilon \coloneqq 0$,
$E \coloneqq 10^{-3}$, $\sigma \coloneqq 10^{-4}$, and $\tau \coloneqq
10^{-8}$, we obtain the results in Figure~\ref{fig:finite-diff}.  The
additional error caused by the finite differences is here negligible.

\paragraph{Vanishing profile}

\begin{figure}[t]
  \centering
  \begin{tabular}{l@{\qquad}l}
    \subfloat[Comparison underlying curve.]
    {\hspace*{5pt}%
    \includegraphics{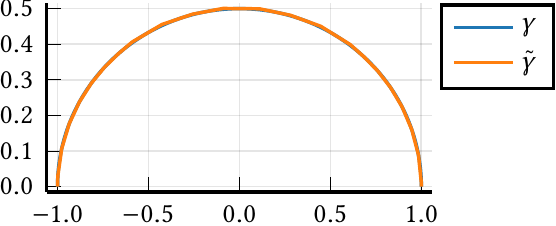}}
    & \subfloat[Comparison sleeve profile.]
      {\hspace*{1pt}%
      \includegraphics{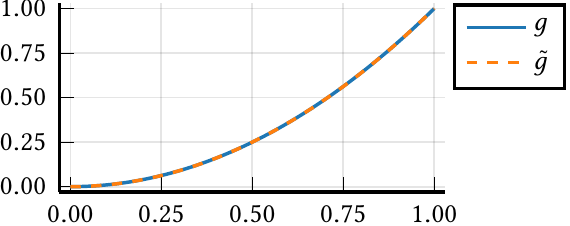}}
    \\
    \subfloat[Recovered sleeve function $\tilde f$.]
    {\includegraphics{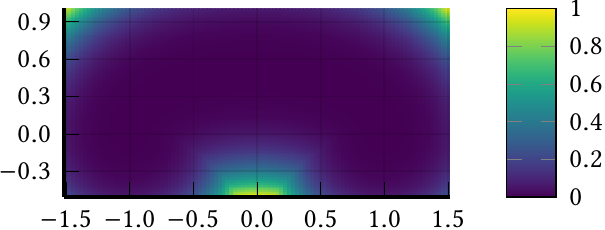}}
    & \subfloat[Absolute error $\absn{\tilde f - f}$]
      {\includegraphics{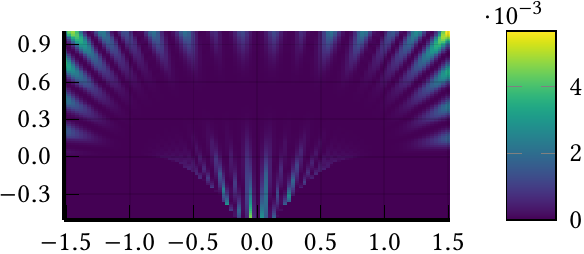}}
  \end{tabular}
  \caption{Sleeve function based on a half-ellipse and quadratic
    profile with vanishing derivative.  The loss of significance
    during the projection with Algorithm~\ref{alg:proj-cur} onto the
    unknown curve becomes visible---especially in the absolute error.}
  \label{fig:vanish-profile}
\end{figure}

In the last example, we consider again the half-ellipse but here
together with the profile function $g: t \mapsto t^2$.  Differently
form above, the derivative $\dot g$ is here not bounded away from zero on
$[0,\rho]$; so the required (again exact) derivative $\nabla f$ may
nearly vanish for small step sizes $s$.  The results of the
approximation with $\epsilon \coloneqq 0$, $E \coloneqq 10^{-2}$, and
$\sigma \coloneqq 10^{-4}$ is shown in
Figure~\ref{fig:vanish-profile}.  Notice that the approximation of the
underlying curve is much worse compared with the previous example.  If
$E$ becomes smaller, the polygonal chain starts to oscillate around
the true curve.  The projection error caused by the loss of
significance in \thref{alg:proj-cur} becomes clearly visible.  From a
numerical point, it is crucial that $\dot g$ is bounded from below away
from zero although this bound does not occur in the derived
approximation guarantees.

\section{Conclusion}
\label{sec:concl}

In order to capture a highly multivariate, curve-based sleeve
function, we propose an two-step method that reduces the recovery
problem to two one-dimensional approximation tasks.  The number of
samples required for the sleeve profile conforms to the
one-dimensional approximation theory.  The samples needed to find the
underlying curve mainly depend on its length and the well-separation.
The capturing of the underlying curve, which was the main interest of
the present work, progresses hand over hand along the curve.  Studying
the geometry, we guarantee a minimal and maximal step size, which
ensures that the algorithm cannot get stuck on the one hand and that
the wanted quality is achieved.  The derived error bounds for the two
proposed one-dimensional approximations tasks are completely
independence of the actual dimension.  In the moment, the algorithm is
based on linear approximations.  If the point queries are corrupted by
noise, these could be replaced by higher-order methods to improve the
performance.  The same applies to the required gradients, which have
been partly estimated by symmetric differences during the numerical
experiments.  For greater uncertainties in the function evaluations,
these could be replaced by higher-order differences or other
techniques like extrapolation may be used.  Finally, we exclusively
consider curve-based sleeve functions, so it is interesting to
generalize the proposed method to manifolds---either by approximating
the manifold by a suitable mesh or by some geometric multi-resolution
analysis \cite{ACM12}.  The presents results are a first step to study
general sleeve functions, and we believe that many
results---especially the minimal/maximal step sizes---can be
generalized to manifold-based sleeve functions.

\paragraph{Acknowledgment}

The author is especially grateful to Sandra \pers{Keiper}, the author
of \cite{Kei19}, for many fruitful discussions and for drawing my
attention to the topic of generalized ridge and sleeve functions.


\printbibliography

\end{document}